\let\pa\partial
\let\na\nabla
\let\eps\varepsilon
\newcommand{\N}{{\mathbb N}}
\newcommand{\R}{{\mathbb R}}
\newcommand{\diver}{\operatorname{div}}
\newcommand{\D}{{\mathcal D}}
\newcommand{\E}{{\mathcal E}}
\newcommand{\F}{{\mathcal F}}
\newcommand{\T}{{\mathcal T}}
\newcommand{\V}{{\mathcal V}}
\newcommand{\dist}{{\mathrm{d}}}
\newcommand{\m}{\mathrm{m}}
\newcommand{\dt}{{\triangle t}}
\newtheorem{theorem}{Theorem}
\newtheorem{lemma}[theorem]{Lemma}
\newtheorem{proposition}[theorem]{Proposition}
\newtheorem{remark}{Remark}
\begin{document}

\title[Convergence of a finite-volume scheme]{Convergence of a 
finite-volume scheme for a degenerate cross-diffusion model for ion transport}

\author[C. Canc\`es]{Cl\'ement Canc\`es}
\address{Inria, Univ. Lille, CNRS, UMR 8524 - Laboratoire Paul Painlev\'e, F-59000 Lille}
\email{clement.cances@inria.fr}

\author[C. Chainais-Hillairet]{Claire Chainais-Hillairet}
\address{Univ. Lille, CNRS, UMR 8524, Inria - Laboratoire Paul Painlev\'e, F-59000 Lille}
\email{Claire.Chainais@math.univ-lille1.fr}

\author[A. Gerstenmayer]{Anita Gerstenmayer}
\address{Institute for Analysis and Scientific Computing, Vienna University of
	Technology, Wiedner Hauptstra\ss e 8--10, 1040 Wien, Austria}
\email{anita.gerstenmayer@tuwien.ac.at}

\author[A. J\"ungel]{Ansgar J\"ungel}
\address{Institute for Analysis and Scientific Computing, Vienna University of
	Technology, Wiedner Hauptstra\ss e 8--10, 1040 Wien, Austria}
\email{juengel@tuwien.ac.at}

\date{\today}

\thanks{The authors have been supported by the Austrian-French Program {\em Amad\'ee}
of the Austrian Exchange Service (\"OAD). The work of the first and second authors is supported by
the LABEX CEMPI (ANR-11-LABX-0007-01).
The third and fourth authors acknowledge partial support from
the Austrian Science Fund (FWF), grants P27352, P30000, F65, and W1245}

\begin{abstract}
An implicit Euler finite-volume scheme for a degenerate cross-diffusion system 
describing the ion transport through biological membranes is analyzed. 
The strongly coupled equations for the ion concentrations include drift terms
involving the electric potential, which is coupled to the concentrations through
the Poisson equation. The cross-diffusion system possesses a
formal gradient-flow structure revealing nonstandard degeneracies,
which lead to considerable mathematical difficulties.
The finite-volume scheme is based on two-point flux
approximations with ``double'' upwind mobilities. It preserves the structure of
the continuous model like nonnegativity, upper bounds, and entropy dissipation. 
The degeneracy is overcome by proving a new discrete Aubin-Lions lemma 
of ``degenerate'' type.
Under suitable assumptions, the existence and uniqueness of bounded discrete solutions,
a discrete entropy inequality, and the convergence of the scheme
is proved. Numerical simulations of a calcium-selective ion channel 
in two space dimensions indicate that the numerical scheme is of first order.
\end{abstract}

\keywords{Ion transport, finite-volume method, gradient flow, entropy method,
existence of discrete solutions, convergence of the scheme, calcium-selective
ion channel.}

\subjclass[2000]{65M08, 65M12, 35K51, 35K65, 35Q92.}

\maketitle


\section{Introduction}

The ion transport through biological channels 
plays an important role in all living organisms.
On a macroscopic level, the transport can be described by nonlinear partial differential
equations for the ion concentrations (or, more precisely, volume fractions)
and the surrounding electric potential. A classical model for ion transport
are the Poisson-Nernst-Planck equations \cite{Ner88}, which satisfy Fick's law
for the fluxes. However, this approach does not include size exclusion effects
in narrow ion channels. Taking into account the finite size of the ions, one can
derive from an on-lattice model in the diffusion limit another set of 
differential equations with fluxes depending on the gradients of all species 
\cite{BDPS10,SLH09}.
These nonlinear cross-diffusion terms are common in multicomponent systems
\cite[Chapter~4]{Jue16}.
In this paper, we propose an implicit Euler finite-volume discretization
of the resulting cross-diffusion system. The scheme is designed in such a way that
the nonnegativity and upper bound of the concentrations as well as the 
entropy dissipation is preserved on the discrete level. 

More specifically, the evolution of the concentrations $u_i$ and fluxes $\F_i$ 
of the $i$th ion species is governed by the equations
\begin{equation}\label{1.eq}
  \pa_t u_i + \diver \F_i = 0, \quad \F_i = -D_i\big(u_0\na u_i - u_i\na u_0 
  + u_0u_i\beta z_i\na\Phi\big)\quad\mbox{in }\Omega,\ t>0,
\end{equation}
for $i=1,\ldots,n$, where $u_0=1-\sum_{i=1}^n u_i$ is the concentration (volume fraction)
of the electro-neutral solvent, $D_i>0$ is a diffusion coefficient, $\beta>0$
is the (scaled) inverse thermal voltage, and $z_i\in\R$ the charge of the $i$th species.
Observe that we assumed Einstein's relation which says that the quotient of the
diffusion and mobility coefficients is constant, and we call this constant $1/\beta$.
The electric potential is determined by the Poisson equation
\begin{equation}\label{1.poi}
  -\lambda^2\Delta\Phi = \sum_{i=1}^nz_iu_i + f \quad\mbox{in }\Omega,
\end{equation}
where $\lambda^2$ is the (scaled) permittivity constant and $f=f(x)$ is a permanent
background charge density. Equations \eqref{1.eq} and \eqref{1.poi}
are solved in a bounded domain $\Omega\subset\R^d$. 

In order to match experimental conditions, the boundary $\pa\Omega$ is supposed to
consist of two parts, the insulating part $\Gamma_N$, on which no-flux boundary
conditions are prescribed, and the union $\Gamma_D$ of boundary contacts
with external reservoirs, on which the concentrations are fixed. The electric
potential is prescribed at the electrodes on $\Gamma_D$. This leads to the
mixed Dirichlet-Neumann boundary conditions
\begin{align}
  \F_i\cdot\nu=0\quad\mbox{on }\Gamma_N, 
	&\quad u_i=\overline{u}_i\quad\mbox{on }\Gamma_D, \quad i=1,\ldots,n, \label{1.bc1} \\
  \na\Phi\cdot\nu=0\quad\mbox{on }\Gamma_N, 
	&\quad \Phi=\overline{\Phi}\quad\mbox{on }\Gamma_D, \label{1.bc2}
\end{align}
where the boundary data $(\overline{u}_i)_{1\leq i\leq n}$ and ${\overline{\Phi}}$ can be defined on the whole domain $\Omega$.
Finally, we prescribe the initial conditions
\begin{equation}\label{1.ic}
  u_i(\cdot,0) = u_i^{I}\quad\mbox{in }\Omega,\ i=1,\ldots,n.
\end{equation}

The main mathematical difficulties of equations \eqref{1.eq} are the strong coupling
and the fact that the diffusion matrix $(A_{ij}(u))$, defined by
$A_{ij}(u)=D_iu_i$ for $i\neq j$ and $A_{ii}(u)=D_i(u_0+u_i)$ is not symmetric
and not positive definite. It was shown in \cite{BDPS10,Jue15} that
system \eqref{1.eq} possesses a formal gradient-flow structure. This means that
there exists a (relative) entropy functional $H[u]=\int_\Omega h(u)dx$ with the
entropy density
$$
  h(u) = \sum_{i=0}^n\int_{\overline{u}_i}^{u_i}\log\frac{s}{\overline{u}_i}ds
	+ \frac{\beta\lambda^2}{2}|\na(\Phi-\overline{\Phi})|^2,
$$
where $u=(u_1,\ldots,u_n)$ and $u_0=1-\sum_{i=1}^n u_i$, 
such that \eqref{1.eq} can be formally written as
$$
  \pa_t u_i = \diver\bigg(\sum_{j=1}^n B_{ij}\na w_j\bigg),
$$
where $B_{ii}=D_iu_0u_i$, $B_{ij}=0$ for $i\neq j$ provide a diagonal positive definite
matrix, and $w_j$ are the entropy variables, defined by
\begin{align*}
  & \frac{\pa h}{\pa u_i} = w_i - \overline{w}_i, \quad\mbox{where} \\
  & w_i = \log\frac{u_i}{u_0} + \beta z_i\Phi, \quad 
	\overline{w}_i = \log\frac{\overline{u}_i}{\overline{u}_0} + \beta z_i\overline{\Phi},
	\quad i=1,\ldots,n.
\end{align*}
We refer to \cite[Lemma 7]{GeJu17} for the computation of $\pa h/\pa u_i$.

The entropy structure of \eqref{1.eq} is useful for two reasons. First,
it leads to $L^\infty$ bounds for the concentrations. Indeed, 
the transformation $(u,\Phi)\mapsto w$ to entropy variables can be inverted,
giving $u=u(w,\Phi)$ with
$$
  u_i(w,\Phi) = \frac{\exp(w_i-\beta z_i\Phi)}{1+\sum_{j=1}^n
	\exp(w_j-\beta z_j\Phi)}, \quad i=1,\ldots,n.
$$
Then $u_i$ is positive and bounded from above, i.e.\ 
\begin{equation}\label{1.D}
  u\in\D = \bigg\{u\in(0,1)^n:\sum_{i=1}^n u_i < 1\bigg\}.
\end{equation}
This yields $L^\infty$ bounds without the use of a maximum principle. 
Second, the entropy
structure leads to gradient estimates via the entropy inequality
$$
  \frac{dH}{dt} + \frac12\int_\Omega \sum_{i=1}^n D_iu_0u_i|\na w_i|^2 dx \le C,
$$
where the constant $C>0$ depends on the Dirichlet boundary data.
Because of 
\begin{equation}\label{1.naw}
  \sum_{i=1}^n u_0u_i\bigg|\na \log\frac{u_i}{u_0}\bigg|^2 
	= 4\sum_{i=1}^n u_0|\na u_i^{1/2}|^2 + 4|\na u_0^{1/2}|^2 + |\na u_0|^2,
\end{equation}
we achieve gradient estimates for $u_0^{1/2}u_i$ and $u_0^{1/2}$.
Since $u_0$ may vanish locally, this does not give gradient bounds for $u_i$,
which expresses the degenerate nature of the cross-diffusion system.
As a consequence, the flux has to be formulated in the terms of gradients
of $u_0^{1/2}u_i$ and $u_0^{1/2}$ only, namely
\begin{equation}\label{1.Fsqrt}
  \F_i = -D_i\big(u_0^{1/2}\na(u_0^{1/2}u_i) - 3u_0^{1/2}u_i\na u_0^{1/2}
	+ u_0u_i\beta z_i\na\Phi\big).
\end{equation}
The challenge is to derive a discrete version of this formulation.
It turns out that \eqref{2.Fsqrt} below is the right formulation in our context
(assuming vanishing drift parts). 

Our aim is to design a numerical approximation of \eqref{1.eq} which preserves the
structural properties of the continuous equations. This suggests to use
the entropy variables as the unknowns, as it was done in our previous work
\cite{GeJu17} with simulations in one space dimension.
Unfortunately, we have not been able to perform a numerical convergence analysis
with these variables. The reason is that we need discrete chain rules in order
to formulate \eqref{1.naw} on the discrete level and these discrete chain rules
seem to be not easily available. Therefore, we use the original variables $u_i$ for the
numerical discretization. Interestingly, we are still able to prove that the
scheme preserves the nonnegativity, upper bound, and entropy inequality.
However, the upper bound comes at a price: We need to assume that all
diffusion coefficients $D_i$ are the same. Under this assumption, 
$u_0=1-\sum_{i=1}^n u_i$ solves a drift-diffusion equation for which the
(discrete) maximum principle can be applied. It is not surprising that the
$L^\infty$ bound can be shown only under an additional condition, since 
cross-diffusion systems usually do not allow for a maximum principle.

The key observation for the numerical discretization is that the fluxes can
be written on each cell in  a ``double'' drift-diffusion form, i.e., both
$\F_i = -D_i(u_0\na u_i - u_i V_i)$ and $V_i = \na u_0 - \beta z_i u_0\na\Phi$
have the structure $\na v + vF$, where $\na v$ is the diffusion term and
$vF$ is the drift term. We discretize $\F$ and $V$ by using a two-point flux
aproximation with ``double'' upwind mobilities.

Our analytical results are stated and proved for no-flux boundary conditions on
$\pa\Omega$. Mixed Dirichlet-Neumann boundary conditions could be prescribed as
well, but the proofs would become even more technical.
The main results are as follows. 

\begin{itemize}
\item We prove the existence of solutions to the fully discrete numerical scheme
(Theorem \ref{thm.ex}). If the drift part vanishes, the solution is unique. 
The existence proof uses a topological degree argument in finite space dimensions, 
while the uniqueness proof is based on the entropy method of Gajewski \cite{Gaj94}, 
recently extended to cross-diffusion systems \cite{GeJu17,ZaJu17}.
\item Thanks to the ``double'' upwind structure, 
the scheme preserves the nonnegativity and upper bound for the concentrations
(at least if $D_i=D$ for all $i$). 
Moreover, convexity arguments show that the discrete entropy is dissipated with
a discrete entropy production analogous to \eqref{1.naw}
(Theorem \ref{thm.ent}). The proof of the discrete entropy 
inequality only works if the drift term vanishes, since
we need to control a discrete version of the sum $\sum_{i=1}^n u_i$ from below;
see the discussion after Theorem \ref{thm.ent}.
\item The discrete solutions converge to the continuous solutions to \eqref{1.eq}
as the mesh size tends to zero (Theorem \ref{thm.conv}). The proof is based on
a priori estimates obtained from the discrete entropy inequality. 
The compactness is derived from a new discrete Aubin-Lions lemma, which takes 
into account the nonstandard degeneracy of the equations; see Lemma \ref{lem.aubin2}
in the appendix.
\item Numerical experiments for a calcium-selective ion channel in two space dimensions
show the dynamical behavior of the solutions and their large-time asymptotics to the
equilibrium. The tests indicate that the order of convergence in the $L^1$ norm
is one.
\end{itemize}

In the literature, there exist some results on finite-volume schemes for
cross-diffusion systems. 
An upwind two-point flux approximation similar to
our discretization was recently used in \cite{Oul17} for a seawater intrusion 
cross-diffusion model. 
A two-point flux approximation with a nonlinear positivity-preserving approximation
of the cross-diffusion coefficients, modeling the segregation of a two-species
population, was suggested in \cite{ABB11}, assuming positive definiteness of
the diffusion matrix. The Laplacian structure of the population
model (still for positive definite matrices) was exploited in \cite{Mur17}
to design a convergent {\em linear} finite-volume scheme, which avoids fully 
implicit approximations.
A semi-implicit finite-volume discretization for a biofilm model with a nonlocal
time integrator was proposed in \cite{RaEb14}.
Finite-volume schemes for cross-diffusion systems with nonlocal (in space) terms were
also analyzed; see, for instance, \cite{ABS15} for a food chain model and 
\cite{ABLS15} for an epidemic model. Moreover, a finite-volume
scheme for a Keller-Segel system with additional cross diffusion and discrete
entropy dissipation property was investigated in \cite{BeJu14}.
All these models, however, do not include volume filling and do not possess the
degenerate structure explained before.

The paper is organized as follows. The numerical scheme and the main results
are presented in Section \ref{sec.scheme}. In Section \ref{sec.ex}, the 
existence and uniqueness of bounded discrete solutions are shown.
We prove the discrete entropy inequality and further a priori estimates
in Section \ref{sec.est}, while Section \ref{sec.conv} is concerned with
the convergence of the numerical scheme. Numerical experiments are given in Section \ref{sec.num} in order to illustrate the order of convergence and the long time behavior of the scheme. 
 For the compactness arguments, we need
two discrete Aubin-Lions lemmas which are proved in the appendix. 


\section{Numerical scheme and main results}\label{sec.scheme}

\subsection{Notations and definitions}

We summarize our general hypotheses on the data:

\begin{labeling}{(A44)}
\item[(H1)] Domain: $\Omega\subset\R^d$ ($d=2$ or $d=3$) is an open, bounded, 
polygonal domain with $\pa\Omega=\Gamma_D\cup\Gamma_N\in C^{0,1}$, 
$\Gamma_D\cap\Gamma_N=\emptyset$.

\item[(H2)] Parameters: $T>0$, $D_i>0$, $\beta>0$, and $z_i\in\R$,
$i=1,\ldots,n$.

\item[(H3)] Background charge: $f\in L^\infty(\Omega)$.

\item[(H4)] Initial and boundary data: $u_i^{I}\in L^\infty(\Omega)$,
$\overline{u}_i\in H^1(\Omega)$ satisfy $u_i^{I}\ge 0$, $\overline{u}_i \ge 0$ and 
$1-\sum_{i=1}^n u_i^{I}\ge 0$, $1-\sum_{i=1}^n \overline{u}_i\ge 0$
in $\Omega$ for $i=1,\ldots,n$, 
and $\overline{\Phi}\in H^1(\Omega)\cap L^\infty(\Omega)$.
\end{labeling}

For our main results, we need additional technical assumptions:

\begin{labeling}{(A44)}
	\item[(A1)] $\pa\Omega=\Gamma_N$, i.e., we impose no-flux
	boundary conditions on the whole boundary.
	\item[(A2)] The diffusion constants are equal, $D_i=D>0$ for $i=1,\ldots,n$.
	\item[(A3)] The drift terms are set to zero, $\Phi\equiv 0$.
\end{labeling}

\begin{remark}[Discussion of the assumptions]\rm\label{rem.disc}
Assumption (A1) is supposed for simplicity only. Mixed Dirichlet-Neumann
boundary conditions can be included in the analysis (see, e.g., \cite{GeJu17}), 
but the proofs become even more technical. 
Mixed boundary conditions are chosen in the numerical experiments; therefore,
the numerical scheme is defined for that case.
Assumption (A2) is needed for the derivation of an upper bound for the 
solvent concentration.
Indeed, when $D_i=D$ for all $i$, summing \eqref{1.eq} over $i=1,\ldots,n$ gives
$$
  \pa_t u_0 = D\diver(\na u_0 - u_0 w\na\Phi), \quad\mbox{where } 
	w = \beta\sum_{i=1}^n z_iu_i.
$$
On the discrete level, we replace $u_0w\na\Phi$ by an upwind approximation. This
allows us to apply the discrete maximum principle showing that $u_0\ge 0$
and hence $u=(u_1,\ldots,u_n)\in\overline\D$ with $\D$ defined in \eqref{1.D}.
Finally, Assumption (A3) is needed to derive a discrete version of the entropy inequality.
Without the drift terms, the upwinding value does not depend on the index of the
species, which simplifies some expressions; see Remark \ref{rem.simpl}. 
\qed
\end{remark}

For the definition of the numerical scheme for \eqref{1.eq}-\eqref{1.poi}, we need
to introduce a suitable discretization of the domain $\Omega$ and the interval
$(0,T)$. For simplicity, we consider a uniform time discretization with time
step $\dt>0$, and we set $t^k=k\dt$ for $k=1,\ldots,N$, where $T>0$,
$N\in\N$ are given and $\dt=T/N$. The domain $\Omega$ is discretized by a
regular and admissible triangulation in the sense of \cite[Definition 9.1]{EGH00}.
The triangulation consists of a family $\T$ of open polygonal convex subsets of 
$\Omega$ (so-called cells), a family $\E$ of edges (or faces in three dimensions), 
and a family of points $(x_K)_{K\in\T}$ associated to the cells. 
The admissibility assumption implies that the straight line between two 
centers of neighboring cells $\overline{x_Kx_L}$ 
is orthogonal to the edge $\sigma=K|L$ between two
cells $K$ and $L$. The condition is satisfied by, for instance, triangular meshes
whose triangles have angles smaller than $\pi/2$ \cite[Examples 9.1]{EGH00}
or Voronoi meshes \cite[Example 9.2]{EGH00}.

We assume that the family of edges
$\E$ can be split into internal and external edges $\E=\E_{\rm int}\cup\E_{\rm ext}$
with $\E_{\rm int} = \{\sigma\in\E:\sigma\subset\Omega\}$ and $\E_{\rm ext}=\{\sigma
\in\E:\sigma\subset\pa\Omega\}$. Each exterior edge is assumed to be an element of
either the Dirichlet or Neumann boundary, i.e.\ $\E_{\rm ext}=\E_{\rm ext}^D\cup
\E_{\rm ext}^N$. 
For given $K\in\T$, we define the set $\E_K$ of the edges of $K$, which is the union
of internal edges and edges on the Dirichlet or Neumann boundary, and we set 
$\E_{K,\rm int}=\E_K\cap \E_{\rm int}$.

The size of the mesh
is defined by $h(\T) = \sup\{\operatorname{diam}(K):K\in\T\}$.  
For $\sigma\in\E_{\rm int}$ with $\sigma=K|L$, 
we denote by $\dist_\sigma=\dist(x_K,x_L)$ 
the Euclidean distance between $x_K$ and $x_L$, while
for $\sigma\in\E_{\rm ext}$, we set $\dist_\sigma=d(x_K,\sigma)$.
For a given edge $\sigma\in\E$,
the transmissibility coefficient is defined by
\begin{equation}\label{2.trans}
  \tau_\sigma = \frac{\m(\sigma)}{\dist_\sigma},
\end{equation}
where $\m(\sigma)$ denotes the Lebesgue measure of $\sigma$. 

We impose a regularity assumption on the mesh: 
There exists $\zeta>0$ such that for all $K\in\T$ and $\sigma\in\E_K$, 
it holds that 
\begin{equation}\label{2.dd}
  \dist(x_K,\sigma)\ge \zeta \dist_\sigma.
\end{equation}
This hypothesis is needed to apply discrete functional inequalities 
(see \cite{BCF15,EGH00}) and a discrete compactness theorem 
(see \cite{GaLa12}). 

It remains to introduce suitable function spaces for the numerical discretization.
The space $\mathcal{H}_\T$ of piecewise constant functions is defined by
$$
  \mathcal{H}_\T = \bigg\{v:\overline\Omega\to\R:\exists (v_K)_{K\in\T}\subset\R,\
	v(x)=\sum_{K\in\T}v_K\mathbf{1}_K(x)\bigg\}.
$$
The (squared) discrete $H^1$ norm on this space is given by
\begin{equation}\label{2.norm1}
  \|v\|_{1,\T}^2 = \sum_{\sigma=K|L\in\E_{\rm int}}\tau_\sigma(v_K-v_L)^2
	+ \sum_{K\in\T}\m(K)v_K^2.
\end{equation}
The discrete $H^{-1}$ norm is the dual norm with respect to the $L^2$
scalar product,
\begin{equation}\label{2.norm-1}
  \|v\|_{-1,\T} = \sup\bigg\{\int_\Omega vw dx: w\in\mathcal{H}_\T,\
	\|w\|_{1,\T}=1\bigg\}.
\end{equation}
Then 
$$
  \bigg|\int_\Omega vw dx\bigg| \le \|v\|_{-1,\T}\|w\|_{1,\T}
	\quad\mbox{for }v,w\in\mathcal{H}_\T.
$$
Finally, we introduce the space $\mathcal{H}_{\T,\dt}$ of piecewise constant
in time functions with values in $\mathcal{H}_\T$,
$$
  \mathcal{H}_{\T,\dt} = \bigg\{ v:\overline\Omega\times[0,T]\to\R:
	\exists(v^k)_{k=1,\ldots,N}\subset\mathcal{H}_\T,\
	v(x,t) = \sum_{k=1}^Nv^k(x)\mathbf{1}_{(t^{k-1},t^k)}(t)\bigg\},
$$
equipped with the discrete $L^2(0,T;H^1(\Omega))$ norm
$$
  \|v\|_{1,\T,\dt} = \bigg(\sum_{k=1}^N\dt\|v^k\|_{1,\T}^2\bigg)^{1/2}.
$$

For the numerical scheme, we introduce some further definitions.
Let $u_i\in\mathcal{H}_\T$ with values $\overline{u}_{i,\sigma}$ on the Dirichlet
boundary ($i=1,\ldots,n$). Then we introduce
\begin{align}
	&\text{D}_{K,\sigma}(u_i) = u_{i,K,\sigma}-u_{i,K}, \label{2.defDu} \\
  &\mbox{where}\quad u_{i,K,\sigma} = \begin{cases}
	u_{i,L} \quad&\text{ for }\sigma\in\E_{\rm int},\ \sigma=K|L,\\
  \overline{u}_{i,\sigma} \quad&\text{ for }\sigma\in\E^D_{{\rm ext},K},\\
  u_{i,K} &\text{ for }\sigma\in\E^N_{{\rm ext},K}, 
  \end{cases} \quad
	\overline{u}_{i,\sigma} = \frac{1}{\m(\sigma)}\int_\sigma\overline{u}_i ds.
	\nonumber
\end{align}
The numerical fluxes $\F_{K,\sigma}$
should be consistent approximations to the exact fluxes through
the edges $\int_\sigma \F\cdot\nu ds$. We impose the conservation of the numerical
fluxes $\F_{K,\sigma}+\F_{L,\sigma}=0$ for edges $\sigma=K|L$, requiring that they
vanish on the Neumann boundary edges, $\F_{K,\sigma}=0$ for 
$\sigma\in\E_{{\rm ext},K}^N$. Then the discrete integration-by-parts formula becomes
for $u\in\mathcal{H}_\T$
\begin{equation*}
  \sum_{K\in\T}\sum_{\sigma\in\E_K}\F_{K,\sigma}u_{K}
	= -\sum_{\sigma\in\E}\F_{K,\sigma}\textrm{D}_{K,\sigma}(u)
	+ \sum_{\sigma\in\E_{\rm ext}^D}\F_{K,\sigma}u_{K,\sigma}.
\end{equation*}
When $\pa\Omega=\Gamma_N$, this formula simplifies to
\begin{equation}\label{2.ibp}
  \sum_{K\in\T}\sum_{\sigma\in\E_K}\F_{K,\sigma}u_{K}
	= \sum_{\sigma=K|L\in\E_{\rm int}}\F_{K,\sigma}(u_{K}-u_{L}).
\end{equation}


\subsection{Numerical scheme}

We need to approximate the initial, boundary, and given functions on the elements
$K\in\T$ and edges $\sigma\in\E$:
\begin{align*}
  u^{I}_{i,K} &= \frac{1}{\m(K)}\int_{K}u^{I}_i(x)dx, &
	f_K &= \frac{1}{\m(K)}\int_{K}f(x)dx, \\
  \overline{u}_{i,\sigma} &= \frac{1}{\m(\sigma)}\int_{\sigma}\overline{u}_ids, & 
	\overline{\Phi}_{\sigma} &= \frac{1}{\m(\sigma)}\int_{\sigma}\overline{\Phi}ds,
\end{align*}
and we set $u_{0,K}^I= 1-\sum_{i=1}^n u_{i,K}^I$ and 
$\overline{u}_{0,\sigma}=1-\sum_{i=1}^n\overline{u}_{i,\sigma}$.

The numerical scheme is as follows. Let $K\in\T$, $k\in\{1,\ldots,N\}$, 
$i=1,\ldots,n$, and
$u_{i,K}^{k-1}\ge 0$ be given. Then the values $u_{i,K}^k$ are determined by
the implicit Euler scheme
\begin{equation}\label{2.equ}
  \m(K)\frac{u_{i,K}^k-u_{i,K}^{k-1}}{\dt} + \sum_{\sigma\in\E_K}\F_{i,K,\sigma}^k = 0,
\end{equation}
where the fluxes $\F_{i,K,\sigma}^k$ are given by the upwind scheme
\begin{equation}\label{2.eqF}
  \F_{i,K,\sigma}^k = -\tau_\sigma D_i\Big(u_{0,\sigma}^k\text{D}_{K,\sigma}(u_i^k)
	- u_{i,\sigma}^k\big(\text{D}_{K,\sigma}(u_0^k) - \widehat{u}_{0,\sigma,i}^k
	\beta z_i\text{D}_{K,\sigma}(\Phi^k)\big)\Big),
\end{equation}
where $\tau_\sigma$ is defined in \eqref{2.trans}, 
\begin{align}
  & u_{0,K}^k=1-\sum_{i=1}^n u_{i,K}^k, \quad 
	u_{0,\sigma}^k = \max\{u_{0,K}^k,u_{0,L}^k\}, \label{2.u0K} \\
	& u^k_{i,\sigma} = \begin{cases}
  u^k_{i,K} \quad & \text{if } \V^k_{i,K,\sigma}\ge 0, \\
  u^k_{i,K,\sigma} & \text{if }\V^k_{i,K,\sigma}< 0, 
  \end{cases}, \quad
  \widehat{u}_{0,\sigma,i}^k = \begin{cases}
  u^k_{0,K} \quad & \text{if }z_i\text{D}_{K,\sigma}(\Phi^k)\ge 0, \\
  u^k_{0,K,\sigma} & \text{if }z_i\text{D}_{K,\sigma}(\Phi^k)< 0, 
  \end{cases},  \label{2.hat}
\end{align}
and $\V_{i,K,\sigma}^k$ is the ``drift part'' of the flux,
\begin{equation}\label{2.V}
  \V_{i,K,\sigma}^k = \text{D}_{K,\sigma}(u_0^k) - \widehat{u}_{0,\sigma,i}^k
	\beta z_i\text{D}_{K,\sigma}(\Phi^k)
\end{equation}
for $i=1,\ldots,n$. Observe that we employed a double upwinding: one related to
the electric potential, defining $\widehat{u}_{0,\sigma,i}^k$, and
another one related to the drift part of the flux, $\V_{i,K,\sigma}^k$.
The potential is computed via
\begin{equation}\label{2.poi}
  -\lambda^2\sum_{\sigma\in\E_K}\tau_\sigma\text{D}_{K,\sigma}(\Phi^k)
	= \m(K)\bigg(\sum_{i=1}^n z_iu_{i,K}^k + f_K\bigg).
\end{equation}
We recall that the numerical boundary conditions are given by
$\overline{u}_{i,\sigma}$ and $\overline{\Phi}_\sigma$ for $\sigma\in\E_{\rm ext}^D$.

We denote by $u_{i,\T,\dt}$, $\Phi_{\T,\dt}$ the functions in 
$\mathcal{H}_{\T,\dt}$ associated to the values $u_{i,K}^k$ and $\Phi_K^k$,
respectively. Moreover, when dealing with a sequence of meshes $(\T_m)_{m}$ and a sequence of time steps $(\dt_m)_m$, we set $u_{i,m}=u_{i,\T_m,\dt_m}$, $\Phi_m=\Phi_{\T_m,\dt_m}$.

\begin{remark}[Simplified numerical scheme]\label{rem.simpl}\rm
When Assumptions (A1)-(A3) hold, the numerical scheme simplifies to
\begin{align}
  & \m(K)\frac{u_{i,K}^k-u_{i,K}^{k-1}}{\dt} + \sum_{\sigma\in\E_{K,\rm int}}\F_{i,K,\sigma}^k = 0,
	\label{2.simpl1} \\
  & \F_{i,K,\sigma}^k = -\tau_\sigma D\Big(u_{0,\sigma}^k\big(u_{i,L}^k-u_{i,K}^k\big)
	- u_{i,\sigma}^k\big(u_{0,L}^k-u_{0,K}^k\big)\Big), \label{2.simpl2}
\end{align}
where $u_{0,K}^k$ and $u_{0,\sigma}^k$ are defined in \eqref{2.u0K}, and the definition
of $u_{i,\sigma}^k$ simplifies to
$$
  u_{i,\sigma}^k = \begin{cases}
	u_{i,K}^k &\quad\text{if }u_{0,K}^k-u_{0,L}^k\le 0, \\
	u_{i,L}^k &\quad\text{if }u_{0,K}^k-u_{0,L}^k > 0.
	\end{cases}
$$
In the definition of $u_{i,\sigma}^k$, the upwinding value does not depend
on $i$ anymore such that 
\begin{equation}\label{2.sum}
  \sum_{i=0}^n u_{i,\sigma}^k = 1 + \max\{u_{0,K}^k,u_{0,L}^k\}
	- \min\{u_{0,K}^k,u_{0,L}^k\} = 1+|u_{0,K}^k-u_{0,L}^k|.
\end{equation}
This property is needed to control the sum $\sum_{i=1}^n u_{i,\sigma}^k$ from below
in the proof of the discrete entropy inequality; see \eqref{4.sum}.
Finally, we are able to reformulate the discrete fluxes such that we obtain
a discrete version of \eqref{1.Fsqrt} (without the drift part):
\begin{equation}\label{2.Fsqrt}
  \F_{i,K,\sigma} = \tau_\sigma D\bigg\{u_{0,\sigma}^{1/2}
	\big(u_{0,K}^{1/2}u_{i,K} - u_{0,L}^{1/2}u_{i,L}\big) 
	- u_{i,\sigma}\big(u_{0,K}^{1/2}-u_{0,L}^{1/2}\big)\bigg(
	u_{0,\sigma}^{1/2} + 2\frac{u_{0,K}^{1/2}+u_{0,L}^{1/2}}{2}\bigg)\bigg\}.
\end{equation}
This formulation is needed in the convergence analysis.
\qed
\end{remark}


\subsection{Main results}

Since our scheme is implicit and nonlinear, the existence of an approximate
solution is nontrivial. Therefore, our first result concerns the well-posedness
of the numerical scheme.

\begin{theorem}[Existence and uniqueness of solutions]\label{thm.ex}
Let (H1)-(H4) and (A2) hold. Then there exists a solution $(u,\Phi)$ to scheme
\eqref{2.equ}-\eqref{2.poi} satisfying $u^k\in\overline{\D}$ and, if the initial
data lie in $\D$, $u^k\in\D$. If additionally Assumptions (A1) and (A3) hold,
the solution is unique.
\end{theorem}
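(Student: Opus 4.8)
The plan is to solve \eqref{2.equ}--\eqref{2.poi} time step by time step, regarding it as a sequence of finite-dimensional nonlinear problems for $u^{k}=(u_{i,K}^{k})_{i,K}$ given $u^{k-1}\in\overline{\D}$ (cellwise). For a fixed $u^{k}$, the discrete Poisson equation \eqref{2.poi} is a linear system for $\Phi^{k}$ whose matrix is the sign-definite discrete Laplacian with the prescribed boundary data; hence $\Phi^{k}=\Phi^{k}(u^{k})$ is uniquely determined and depends continuously (indeed affinely) on $u^{k}$. Inserting this into the fluxes \eqref{2.eqF}, existence amounts to finding a zero of a single continuous map $\mathcal{G}\colon\R^{n|\T|}\to\R^{n|\T|}$ whose components are the left-hand sides of \eqref{2.equ}. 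I would prove existence by a Brouwer degree argument, using a priori bounds to trap all zeros in the interior of a fixed ball; the strict inclusion and the uniqueness I would obtain from a sharpened maximum principle and from Gajewski's entropy method, respectively.

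First I would show that every solution satisfies $u^{k}\in\overline{\D}$, that is $u_{i,K}^{k}\ge0$ for $i=1,\dots,n$ and $u_{0,K}^{k}=1-\sum_{i=1}^{n}u_{i,K}^{k}\ge0$. Both are discrete min/max principles resting on the monotone upwind structure. For the lower bounds one evaluates \eqref{2.equ} at a cell $K_{0}$ realizing $\min_{K}u_{i,K}^{k}$: there $\textrm{D}_{K_{0},\sigma}(u_i^{k})\ge0$, so the diffusive part of $\sum_{\sigma}\F_{i,K_{0},\sigma}^{k}$ is nonpositive, and the double upwind choices \eqref{2.hat}--\eqref{2.V} are tailored so that the drift part does not reverse this sign; combined with $u_{i,K_{0}}^{k-1}\ge0$ in the time term, a negative minimum is excluded. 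The upper bound is where Assumption (A2) enters: with $D_i=D$, summing \eqref{2.equ} over $i=1,\dots,n$ shows that $u_{0}^{k}$ obeys a discrete drift--diffusion equation with upwind drift, to which the discrete maximum principle applies and yields $u_{0,K}^{k}\ge0$. These sign arguments survive if the flux terms are multiplied by a parameter $\rho\in[0,1]$, so along the homotopy $\mathcal{G}_{\rho}$ joining $\mathcal{G}_{1}=\mathcal{G}$ to the affine map $\mathcal{G}_{0}\colon u\mapsto(\m(K)(u_{i,K}-u_{i,K}^{k-1})/\dt)_{i,K}$ every zero remains in the bounded set $(\overline{\D})^{|\T|}$. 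Choosing a ball $B$ with $(\overline{\D})^{|\T|}\subset B$, no zero lies on $\partial B$, the degree is homotopy invariant, and $\deg(\mathcal{G}_{0},B,0)=+1$; hence $\mathcal{G}=\mathcal{G}_{1}$ has a zero. I expect the main obstacle to be exactly the verification that the drift upwinding in \eqref{2.hat}--\eqref{2.V} preserves the minimum-principle signs---this is the reason for the ``double'' upwind mobilities.

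For the strict inclusion $u^{k}\in\D$ when the datum lies in $\D$, I would sharpen the minimum principle. Assume $u_{i,K_{0}}^{k}=\min_{K}u_{i,K}^{k}=0$. Then the time term in \eqref{2.equ} equals $-\m(K_{0})u_{i,K_{0}}^{k-1}/\dt<0$ because $u_{i,K_{0}}^{k-1}>0$, while the already-established nonnegativity makes the diffusive and upwind-drift parts of $\sum_{\sigma}\F_{i,K_{0},\sigma}^{k}$ nonpositive; a strictly negative plus a nonpositive quantity cannot vanish, a contradiction. Thus $u_{i,K}^{k}>0$ for all $i,K$, and the same argument applied to the $u_{0}$ equation gives $u_{0,K}^{k}>0$, i.e.\ $u^{k}\in\D$ with $\D$ as in \eqref{1.D}.

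Finally, for uniqueness under the additional Assumptions (A1) and (A3), I would use Gajewski's entropy method. Since $\Phi\equiv0$, the Poisson coupling disappears and the scheme reduces to the simplified form \eqref{2.simpl1}--\eqref{2.simpl2}. Given two solutions $(u_{i,K}^{k})$ and $(v_{i,K}^{k})$ with the same initial data, I would introduce a discrete Gajewski-type relative entropy $E^{k}=\sum_{K}\m(K)\,\phi(u_{K}^{k},v_{K}^{k})\ge0$, built from the convex entropy density of the model and vanishing precisely when $u_{K}^{k}=v_{K}^{k}$, so that $E^{0}=0$. Subtracting the two schemes, testing with the difference of the associated discrete entropy variables, using the discrete integration-by-parts formula \eqref{2.ibp}, and exploiting convexity together with the square-root flux reformulation \eqref{2.Fsqrt} and the index-independence identity \eqref{2.sum}, I would establish $E^{k}\le E^{k-1}$ for all $k$. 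With $E^{0}=0$ and $E^{k}\ge0$ this forces $E^{k}=0$, hence $u^{k}=v^{k}$. The delicate point here is to keep the discrete entropy dissipation of the correct sign despite the nonstandard degeneracy ($u_{0}$ may vanish); this is precisely where the reformulation \eqref{2.Fsqrt} and the lower bound on $\sum_{i}u_{i,\sigma}^{k}$ provided by \eqref{2.sum} are needed.
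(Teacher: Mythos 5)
Your existence argument follows the paper's route essentially verbatim: a homotopy $\rho\mapsto\mathcal{G}_\rho$ that scales the fluxes, a topological degree argument in $\R^{n|\T|}$, discrete minimum principles at a minimizing cell for the lower bounds, and Assumption (A2) to close a discrete drift--diffusion equation for $u_0$ and get the upper bound. One technicality you gloss over: along the homotopy you cannot assume a priori that the iterate is nonnegative, so the sign argument at the minimum (which needs $u_{0,\sigma}^k\ge 0$ and $u_{i,\sigma}^k\ge 0$) is not available for the raw scheme. The paper handles this by running the degree argument on a truncated scheme (positive parts inserted in \eqref{3.trunc}) and then showing a posteriori that the truncation is inactive. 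This is repairable, so I regard it as a minor omission.

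The uniqueness part, however, has a genuine gap. You propose to run Gajewski's method directly on the pair $(u,v)$ of solutions, testing the difference of the schemes with differences of entropy variables. But the coefficients of the scheme for $u_i$ --- the mobility $u_{0,\sigma}^k=\max\{u_{0,K}^k,u_{0,L}^k\}$ and, crucially, the upwind selection of $u_{i,\sigma}^k$, which is dictated by the sign of $u_{0,K}^k-u_{0,L}^k$ --- are themselves functions of the solution through $u_0$. If the two solutions were allowed to have different solvent concentrations, the subtracted scheme would contain terms like $u_{0,\sigma}^k(u)\,(u_{i,L}-u_{i,K})-u_{0,\sigma}^k(v)\,(v_{i,L}-v_{i,K})$ with mismatched coefficients, and the two solutions could even upwind in opposite directions on the same edge; no convexity argument of the type you invoke (nor the identity \eqref{2.sum}) gives these terms a sign. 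The paper's proof avoids this by a two-step structure you are missing: thanks to (A2)--(A3), $u_0^k$ satisfies a \emph{closed scalar} discrete equation (obtained by summing over $i$ and using \eqref{2.sum}), whose uniqueness follows from multiplying the difference of two such equations by $u_{0,K}-v_{0,K}$ and using the monotonicity of $z\mapsto z|z|$. Only after $u_0$ is pinned down --- so that both solutions share the same $u_{0,\sigma}^k$ and the same upwind direction on every edge --- does the Gajewski semimetric $\sum_K\m(K)\sum_i\bigl(h_\eps(u_{i,K})+h_\eps(v_{i,K})-2h_\eps(\tfrac{u_{i,K}+v_{i,K}}{2})\bigr)$ decay, via positive semidefiniteness of the Hessians of the auxiliary functions $H_2^\eps$, $H_3^\eps$, up to an $O(\eps)$ remainder that is sent to zero. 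Without this decoupling step your entropy inequality $E^k\le E^{k-1}$ does not close.
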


Assumption (A2) is needed to show that $u_0^k=1-\sum_{i=1}^n u_i^k$ is nonnegative.
Indeed, summing \eqref{2.equ} and \eqref{2.eqF} over $i=1,\ldots.n$, we obtain
$$
  \m(K)\frac{u_{0,K}^k-u_{0,K}^{k-1}}{\dt} 
	= -\sum_{\sigma\in\E_K}\tau_\sigma\bigg(u_{0,\sigma}^k\textrm{D}_{K,\sigma}
	\bigg(\sum_{i=1}^n D_iu_i^k\bigg) - \sum_{i=1}^n D_iu_{i,\sigma}^k
	\mathcal{V}_{i,K,\sigma}^k\bigg).
$$
Under Assumption (A2), it follows that $\sum_{i=1}^n D_iu_{i,K}^k=D(1-u_{0,K}^k)$,
and we can apply the discrete minimum principle, which then implies an $L^\infty$
bound for $u_{i}^k$. This bound allows us to apply a topological degree
argument; see \cite{Dei85,EGGH98}. For the uniqueness proof, we
additionally need Assumption (A3), since we use the entropy method of Gajewski
\cite{Gaj94}, and it seems that this method cannot be applied to
cross-diffusion systems including drift terms \cite{ZaJu17}. 
The idea is to prove first the uniqueness of $u_0^k$, which
solves a discrete nonlinear equation, and then to show the uniqueness of
$u_i^k$ for $i=1,\ldots,n$ by introducing a semimetric $d(u^k,v^k)$ for two
solutions $u^k=(u_1^k,\ldots,u_n^k)$ and $v^k=(v_1^k,\ldots,v_n^k)$ and showing
that it is monotone in $k$, such that a discrete Gronwall argument implies that
$u^k=v^k$.

The second result shows that the scheme preserves a discrete version of the
entropy inequality.

\begin{theorem}[Discrete entropy inequality]\label{thm.ent}\sloppy
Let Assumptions (H1)-(H4) and (A1)-(A3) hold. Then the solution to 
scheme \eqref{2.simpl1}-\eqref{2.simpl2} constructed in Theorem \ref{thm.ex}
satisfies the discrete entropy inequality
\begin{equation}\label{1.epi}
  \frac{H^k-H^{k-1}}{\dt} + I^k \le 0,
\end{equation}
with the discrete entropy
\begin{equation}\label{1.H}
  H^k = \sum_{K\in\T}\m(K)\sum_{i=0}^n\big(u_{i,K}^k(\log u_{i,K}^k-1)+1\big)
\end{equation}
and the discrete entropy production
\begin{align*}
  I^k &= D\sum_{\sigma=K|L\in\E_{\rm int}}\tau_\sigma\bigg(4\sum_{i=1}^n u_{0,\sigma}^k
	\big((u_{i,K}^k)^{1/2}-(u_{i,L}^k)^{1/2}\big)^2 \\
	&\phantom{xx}{}+ 4\big((u_{0,K}^k)^{1/2}-(u_{0,L}^k)^{1/2}\big)^2 
	+ \big(u_{0,K}^k-u_{0,L}^k\big)^2\bigg).
\end{align*}
\end{theorem}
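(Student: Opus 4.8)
The plan is to combine the convexity of the entropy density with the structure of the scheme, reducing the claim to a single pointwise inequality on each interior edge. Write $\phi(s)=s(\log s-1)+1$, which is convex with $\phi'(s)=\log s$. First I would apply the convexity estimate $\phi(u_{i,K}^k)-\phi(u_{i,K}^{k-1})\le \log(u_{i,K}^k)\,(u_{i,K}^k-u_{i,K}^{k-1})$, summed over $K\in\T$ with weights $\m(K)$ and over $i=0,\dots,n$, to bound $H^k-H^{k-1}$ from above by $\sum_{K}\m(K)\sum_{i=0}^n\log(u_{i,K}^k)(u_{i,K}^k-u_{i,K}^{k-1})$. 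The index $i=0$ is admissible because summing \eqref{2.simpl1} over $i=1,\dots,n$ and using $u_0=1-\sum_i u_i$ shows that $u_0$ obeys the same discrete balance with flux $\F_{0,K,\sigma}^k:=-\sum_{i=1}^n\F_{i,K,\sigma}^k$.

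Next I would insert \eqref{2.simpl1} for every $i=0,\dots,n$ and apply the discrete integration-by-parts formula \eqref{2.ibp}, turning the right-hand side into $-\dt\sum_{\sigma=K|L\in\E_{\rm int}}\sum_{i=0}^n\F_{i,K,\sigma}^k(\log u_{i,K}^k-\log u_{i,L}^k)$. Using $\F_{0,K,\sigma}^k=-\sum_{i=1}^n\F_{i,K,\sigma}^k$, the $i=0$ term merges with the others, so on a fixed edge the summand becomes $\sum_{i=1}^n\F_{i,K,\sigma}^k\big((\log u_{i,K}^k-\log u_{0,K}^k)-(\log u_{i,L}^k-\log u_{0,L}^k)\big)$, a discrete analogue of $\sum_i\F_i\cdot\na w_i$ for the entropy variables $w_i=\log(u_i/u_0)$. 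It then suffices to prove the edgewise inequality $T_\sigma\ge P_\sigma$, where, writing $\delta u_i=u_{i,K}^k-u_{i,L}^k$, $\delta u_0=u_{0,K}^k-u_{0,L}^k$ and $\Delta\log u_i=\log u_{i,K}^k-\log u_{i,L}^k$ (similarly $\Delta\log u_0$), one has $T_\sigma=\sum_{i=1}^n(u_{0,\sigma}^k\,\delta u_i-u_{i,\sigma}^k\,\delta u_0)(\Delta\log u_i-\Delta\log u_0)$ and $P_\sigma$ is the summand of $I^k$ for that edge.

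The core of the argument is to expand $T_\sigma$, using $\sum_{i=1}^n\delta u_i=-\delta u_0$, into
\begin{align*}
T_\sigma &= u_{0,\sigma}^k\sum_{i=1}^n \delta u_i\,\Delta\log u_i
+ u_{0,\sigma}^k\,\delta u_0\,\Delta\log u_0 \\
&\quad {}-\delta u_0\sum_{i=1}^n u_{i,\sigma}^k\,\Delta\log u_i
+\Big(\sum_{i=1}^n u_{i,\sigma}^k\Big)\delta u_0\,\Delta\log u_0 .
\end{align*}
For the first piece I would apply the elementary inequality $(x-y)(\log x-\log y)\ge 4(\sqrt{x}-\sqrt{y})^2$ (a comparison of the logarithmic and arithmetic means of $\sqrt{x},\sqrt{y}$) to each species, producing precisely $4u_{0,\sigma}^k\sum_i((u_{i,K}^k)^{1/2}-(u_{i,L}^k)^{1/2})^2$. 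Combining the second and fourth pieces and invoking \eqref{2.sum}, namely $u_{0,\sigma}^k+\sum_{i=1}^n u_{i,\sigma}^k=1+|\delta u_0|$, gives $(1+|\delta u_0|)\,\delta u_0\,\Delta\log u_0\ge\delta u_0\,\Delta\log u_0\ge 4((u_{0,K}^k)^{1/2}-(u_{0,L}^k)^{1/2})^2$ by the same mean inequality.

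The delicate remaining term is the third one, and here the double-upwind choice of $u_{i,\sigma}^k$ is essential; this is the step I expect to be the main obstacle, since the term is not sign-definite on its own. I would split into the cases $u_{0,K}^k\le u_{0,L}^k$ and $u_{0,K}^k>u_{0,L}^k$. In the first case $u_{i,\sigma}^k=u_{i,K}^k$ (the value from the cell where $u_0$ is smaller) and $\delta u_0\le0$, so the third term equals $(u_{0,L}^k-u_{0,K}^k)\sum_i u_{i,K}^k\log(u_{i,K}^k/u_{i,L}^k)$; the inequality $\log s\le s-1$ gives $x\log(x/y)\ge x-y$, hence $\sum_i u_{i,K}^k\log(u_{i,K}^k/u_{i,L}^k)\ge\sum_i(u_{i,K}^k-u_{i,L}^k)=u_{0,L}^k-u_{0,K}^k\ge0$, so the term is $\ge(u_{0,K}^k-u_{0,L}^k)^2$; the case $u_{0,K}^k>u_{0,L}^k$ is symmetric with $u_{i,\sigma}^k=u_{i,L}^k$. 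Adding the three lower bounds yields $T_\sigma\ge P_\sigma$ on every interior edge, and summing against $D\tau_\sigma$ gives $I^k\le-(H^k-H^{k-1})/\dt$, which is \eqref{1.epi}. Only the specific upwinding together with \eqref{2.sum} makes this third contribution reproduce the $(u_{0,K}^k-u_{0,L}^k)^2$ part of the entropy production.
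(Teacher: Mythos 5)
Your argument is essentially the paper's own proof, reorganized: the same multiplier $\log(u_{i,K}^k/u_{0,K}^k)$ (obtained in your version by merging the $i=0$ balance, with flux $-\sum_{i=1}^n\F_{i,K,\sigma}^k$, into the others), the same convexity bound for the time increment, inequality \eqref{3.ineq1} for the diffusive pieces, the upwind choice of $u_{i,\sigma}^k$ combined with $\log s\le s-1$ for the $(u_{0,K}^k-u_{0,L}^k)^2$ contribution, and \eqref{2.sum} for the remaining square-root term; your four pieces correspond exactly to the paper's $A_1$, $A_2+B_2$, and $B_{1}$. The one step you omit is the regularization: Theorem \ref{thm.ex} only guarantees $u^k\in\overline\D$, so individual concentrations and $u_0$ may vanish, in which case $\log u_{i,K}^k=-\infty$ and several of your edge terms become indeterminate products of the form $0\cdot\infty$ (for instance $u_{i,\sigma}^k\,\Delta\log u_i$ when the upwind value is zero but the other one is not), and the convexity bound on $H^k-H^{k-1}$ degenerates to the useless estimate $\le+\infty$. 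The paper handles this by replacing every $u_{i,K}^k$ with $u_{i,K}^k+\eps$, tracking an $O(\eps)$ remainder through the same estimates, and letting $\eps\to0$ at the end; your proof needs this (routine but necessary) addition to be complete.
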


Assumption (A3) is required to estimate the expression $\sum_{i=1}^n u_{i,\sigma}^k$.
In the continuous case, this sum equals $1-u_0$. On the discrete level, this
identity cannot be expected since the value of $u_{i,\sigma}^k$ depends on the
upwinding value; see \eqref{2.hat}. If the drift part vanishes, the upwinding
value does not depend on $i$, as mentioned in Remark \ref{rem.simpl}, and we can
derive the estimate $\sum_{i=1}^n u_{i,\sigma}^k\ge 1-u_{0,\sigma}^k$; see
Section \ref{sec.dei}. Note that the entropy production $I^k$ is the discrete
counterpart of \eqref{1.naw}.

The main result of this paper is the convergence of the approximate solutions
to a solution to the continuous cross-diffusion system.

\begin{theorem}[Convergence of the approximate solution]\label{thm.conv}\sloppy
Let (H1)-(H4) and (A1)-(A3) hold and 
let $(\T_m)$ and $(\dt_m)$ be sequences of admissible meshes and time steps,
respectively, such that $h(\T_m)\to 0$ and $\dt_m\to 0$ as $m\to\infty$.
Let $(u_{0,m},\ldots,u_{n,m})$ be the solution to \eqref{2.simpl1}-\eqref{2.simpl2}
constructed in Theorem \ref{thm.ex}.
Then there exist functions $u_0$, $u=(u_1,\ldots,u_n)$
satisfying $u(x,t)\in\overline\D$,
\begin{align*}
  & u_0^{1/2},\ u_0^{1/2}u_i \in L^2(0,T;H^1(\Omega)), \quad i=1,\ldots,n, \\
  & u_{0,m}^{1/2}\to u_0^{1/2},\ u_{0,m}^{1/2}u_{i,m}\to u_0^{1/2}u_i
	\quad\mbox{strongly in }L^2(\Omega\times(0,T)),
\end{align*}
where $u$ is a weak solution to
\eqref{1.eq}, \eqref{1.bc1}-\eqref{1.ic} (with $\Gamma_N=\pa\Omega$), i.e., for all 
$\phi\in C_0^\infty(\overline\Omega\times[0,T))$ and $i=1,\ldots,n$,
\begin{equation}\label{1.weak}
  \int_0^T\int_\Omega u_i\pa_t\phi dxdt + \int_\Omega u_i^I\phi(\cdot,0)dx
	= D\int_0^T\int_\Omega u_0^{1/2}\big(\na(u_0^{1/2}u_i) - 3u_i\na u_0^{1/2}\big)
	\cdot\na\phi dxdt.
\end{equation}
\end{theorem}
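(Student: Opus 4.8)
The plan is to follow the classical three-step strategy for proving convergence of a finite-volume scheme---uniform a priori estimates, compactness, passage to the limit---with the twist that the degeneracy forces us to work with the nonlinear quantities $u_{0,m}^{1/2}$ and $u_{0,m}^{1/2}u_{i,m}$ rather than with $u_{i,m}$ itself. First I would collect the a priori estimates. Summing the discrete entropy inequality \eqref{1.epi} of Theorem \ref{thm.ent} over $k=1,\dots,N$ yields $H^N+\dt\sum_{k=1}^N I^k\le H^0$, and since $H^0$ is bounded uniformly in $m$ by (H4), the entropy production $I^k$ provides uniform bounds in the discrete norm $\|\cdot\|_{1,\T,\dt}$ for $u_{0,m}^{1/2}$, for $u_{0,m}^{1/2}u_{i,m}$, and for $u_{0,m}$; the $L^\infty$ bounds $0\le u_{i,m}\le 1$ come for free from $u^k\in\overline\D$ (Theorem \ref{thm.ex}). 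In addition, testing the scheme \eqref{2.simpl1} against an arbitrary element of $\mathcal H_\T$ and using the flux reformulation together with these bounds, I would derive a uniform estimate on the discrete time increments of $u_{i,m}$ in $L^2(0,T)$ with values in the dual norm $\|\cdot\|_{-1,\T}$, as required by the compactness lemma.

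Second, compactness. For $u_{0,m}$ I have a genuine spatial $H^1$-type bound and a dual time-increment bound, so a classical discrete Aubin--Lions lemma gives $u_{0,m}\to u_0$ strongly in $L^2(\Omega\times(0,T))$, hence, up to a subsequence, a.e., and consequently $u_{0,m}^{1/2}\to u_0^{1/2}$ strongly in $L^2$. For $u_{i,m}$ the situation is genuinely degenerate: the spatial bound is on $u_{0,m}^{1/2}u_{i,m}$ while the time bound is on $u_{i,m}$, so the classical lemma does not apply. This is precisely what the degenerate discrete Aubin--Lions lemma (Lemma \ref{lem.aubin2}) is designed for, and it yields strong convergence $u_{0,m}^{1/2}u_{i,m}\to w_i$ in $L^2(\Omega\times(0,T))$. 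I then identify the limits: the $L^\infty$ bound gives a weak-$*$ limit $u_{i,m}\rightharpoonup u_i$, and passing to a.e. limits shows $w_i=u_0^{1/2}u_i$ everywhere---on $\{u_0>0\}$ by dividing, and on $\{u_0=0\}$ because both sides vanish. Finally, using the mesh regularity \eqref{2.dd} and a standard reconstruction result for discrete gradients, the discrete gradients of $u_{0,m}^{1/2}$ and $u_{0,m}^{1/2}u_{i,m}$ converge weakly in $L^2$ to $\na u_0^{1/2}$ and $\na(u_0^{1/2}u_i)$, which also establishes the claimed regularity $u_0^{1/2},\,u_0^{1/2}u_i\in L^2(0,T;H^1(\Omega))$.

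Third, the passage to the limit. I would fix $\phi\in C_0^\infty(\overline\Omega\times[0,T))$, set $\phi_K^{k-1}=\phi(x_K,t^{k-1})$, multiply \eqref{2.simpl1} by $\dt\,\phi_K^{k-1}$, and sum over $K\in\T$ and $k$. A discrete summation by parts in time converts the accumulation term into $\sum_{k}\dt\sum_K\m(K)u_{i,K}^k(\phi_K^k-\phi_K^{k-1})/\dt$ plus an initial contribution; by the weak-$*$ convergence of $u_{i,m}$ and the smoothness of $\phi$ this tends to $\int_0^T\int_\Omega u_i\pa_t\phi\,dxdt+\int_\Omega u_i^I\phi(\cdot,0)\,dx$. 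For the flux term I would use the spatial integration-by-parts formula \eqref{2.ibp} together with the reformulation \eqref{2.Fsqrt}, which rewrites each flux purely in terms of the discrete gradients of $u_{0,m}^{1/2}$ and $u_{0,m}^{1/2}u_{i,m}$ and of the upwind values $u_{i,\sigma}$, $u_{0,\sigma}^{1/2}$. Each resulting sum is a product of a strongly convergent factor (a reconstruction of $u_0^{1/2}$ or $u_0^{1/2}u_i$) with a weakly convergent discrete gradient, so it passes to the limit and reproduces the right-hand side of \eqref{1.weak}, after noting $3u_0^{1/2}u_i\na u_0^{1/2}=3(u_0^{1/2}u_i)\na u_0^{1/2}$ so that even this term involves only the good quantities. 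Consistency errors from replacing $\na\phi$ by discrete differences and from the $O(h)$ gaps between $u_{0,\sigma}^{1/2}$, $u_{0,K}^{1/2}$, $u_{0,L}^{1/2}$ vanish as $h(\T_m),\dt_m\to0$ by the smoothness of $\phi$ and the uniform bounds.

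The main obstacle is the degeneracy itself. Because no spatial gradient estimate on $u_{i,m}$ is available, compactness of $u_{i,m}$ cannot be obtained directly, and the limit passage in the nonlinear flux succeeds only because \eqref{2.Fsqrt} has been engineered so that every factor is either strongly convergent ($u_{0,m}^{1/2}$, $u_{0,m}^{1/2}u_{i,m}$, and the upwind values inheriting their limits) or a weakly convergent discrete gradient. The two delicate points are therefore the application of the degenerate Aubin--Lions lemma and the careful identification of the limit in the second, degenerate part of the flux $u_{i,\sigma}(u_{0,K}^{1/2}-u_{0,L}^{1/2})(u_{0,\sigma}^{1/2}+u_{0,K}^{1/2}+u_{0,L}^{1/2})$, where one must verify that the upwind value $u_{i,\sigma}$, combined with the extra $u_0^{1/2}$ factor, contributes $3(u_0^{1/2}u_i)\na u_0^{1/2}$ in the limit rather than an ill-defined $u_i\na u_0$.
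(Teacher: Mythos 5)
Your proposal is correct and follows essentially the same route as the paper: entropy-based a priori estimates (Lemma \ref{lem.est}), the classical discrete Aubin--Lions lemma for $u_{0,m}$ and the degenerate one (Lemma \ref{lem.aubin2}) for $u_{0,m}^{1/2}u_{i,m}$, and a limit passage in the weak formulation built on the flux reformulation \eqref{2.Fsqrt} with strong-times-weak pairings and $O(h)$ consistency errors. The only cosmetic difference is that you re-identify the limit $w_i=u_0^{1/2}u_i$ by hand, whereas Lemma \ref{lem.aubin2} already delivers the limit in the form $yz$ with $z$ the weak-$*$ limit of $z_m$.
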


The compactness of the concentrations follows from the discrete gradient estimates
derived from the entropy inequality \eqref{1.epi}, for which we need Assumption (A3). 
By the discrete Aubin-Lions lemma \cite{EGH08}, we conclude the strong convergence
of the sequence $(u_{0,m}^{1/2})$.
The difficult part is to show the strong convergence of $(u_{0,m}^{1/2}u_{i,m})$,
since there is no control on the discrete gradient of $u_{i,m}$. The idea is
to apply a discrete Aubin-Lions lemma of ``degenerate'' type, proved in
Lemma \ref{lem.aubin2} in the appendix. 


\section{Existence and uniqueness of approximate solutions}\label{sec.ex}

\subsection{$L^\infty$ bounds and existence of solutions}

In order to prove the existence of solutions to \eqref{2.equ}-\eqref{2.poi},
we first consider a truncated problem. This means that we truncate the expressions
in \eqref{2.hat}; more precisely, we consider scheme \eqref{2.equ}, \eqref{2.eqF},
and \eqref{2.poi} with
\begin{align}
  & u^k_{0,K} = 1-\sum_{i=1}^n (u^k_{i,K})^+, \quad 
	u^k_{0,\sigma} = \max\{0,u^k_{0,K},u^k_{0,K,\sigma}\}, \nonumber \\ 
  & \widehat{u}_{0,\sigma,i}^k=\begin{cases}
    (u^k_{0,K})^+ \quad & \text{if }z_i\text{D}_{K,\sigma}(\Phi^k)\ge 0, \\
    (u^k_{0,K,\sigma})^+ & \text{if }z_i\text{D}_{K,\sigma}(\Phi^k)< 0,
  \end{cases} \label{3.trunc} \\
  & u^k_{i,\sigma} = \begin{cases}
    (u^k_{i,K})^+ \quad & \text{if } \V^k_{i,K,\sigma}\ge 0, \\
    (u^k_{i,K,\sigma})^+ & \text{if }\V^k_{i,K,\sigma}< 0, 
  \end{cases} \nonumber
\end{align}
where $z^+=\max\{0,z\}$ for $z\in\R$ and $i=1,\ldots,n$. We show that this
truncation is, in fact, not needed if the initial data are nonnegative.
In the following let (H1)-(H4) hold. 

\begin{lemma}[Nonnegativity of $u_i^k$]\label{lem.ui}
Let $(u,\Phi)$ be a solution to 
\eqref{2.equ}, \eqref{2.eqF}, \eqref{2.poi}, and 
\eqref{3.trunc}. Then $u_{i,K}^k\ge 0$
for all $K\in\T$, $k\in\{1,\ldots,N\}$, and $i=1,\ldots,n$. If $u_i^I>0$ and
$\overline{u}_i>0$ then also $u_{i,K}^k>0$ for all $K\in\T$, $k\in\{1,\ldots,N\}$.
\end{lemma}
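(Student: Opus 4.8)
The plan is to argue by a discrete minimum principle, proceeding by induction on the time index $k$. Fix $i$ and $k$, and assume as induction hypothesis that $u_{i,K}^{k-1}\ge 0$ for all $K\in\T$ (the base case $k=0$ holds by the nonnegativity of $u_i^I$, since $u_{i,K}^0=\m(K)^{-1}\int_K u_i^I\,dx\ge 0$). Let $K_0\in\T$ be a cell realizing the minimum $m:=\min_{K\in\T}u_{i,K}^k=u_{i,K_0}^k$, and suppose for contradiction that $m<0$. I would then read off the scheme \eqref{2.equ} at $K_0$,
\[
  \m(K_0)\frac{m-u_{i,K_0}^{k-1}}{\dt}=-\sum_{\sigma\in\E_{K_0}}\F_{i,K_0,\sigma}^k,
\]
and note that the left-hand side is strictly negative because $m<0\le u_{i,K_0}^{k-1}$. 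The contradiction will follow once I show that every edge flux out of $K_0$ is nonpositive, so that the right-hand side is nonnegative.

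The heart of the matter is the edge-by-edge sign analysis, which is exactly where the truncation \eqref{3.trunc} and the ``double'' upwinding do their job. For a Neumann edge the flux vanishes by construction, so only interior and Dirichlet edges matter. For such an edge $\sigma$ I would write $\F_{i,K_0,\sigma}^k=-\tau_\sigma D_i\big(u_{0,\sigma}^k\mathrm{D}_{K_0,\sigma}(u_i^k)-u_{i,\sigma}^k\V_{i,K_0,\sigma}^k\big)$ and check that the bracket is nonnegative. The diffusive part is nonnegative because $\mathrm{D}_{K_0,\sigma}(u_i^k)=u_{i,K_0,\sigma}^k-m\ge 0$ (as $K_0$ is a minimizer, and since $\overline{u}_{i,\sigma}\ge 0> m$ on Dirichlet edges) while $u_{0,\sigma}^k=\max\{0,u_{0,K_0}^k,u_{0,K_0,\sigma}^k\}\ge 0$. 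For the drift part I would distinguish the two upwind cases in \eqref{3.trunc}: if $\V_{i,K_0,\sigma}^k\ge 0$ then $u_{i,\sigma}^k=(u_{i,K_0}^k)^+=m^+=0$, so the term disappears; if $\V_{i,K_0,\sigma}^k<0$ then $u_{i,\sigma}^k=(u_{i,K_0,\sigma}^k)^+\ge 0$, whence $-u_{i,\sigma}^k\V_{i,K_0,\sigma}^k\ge 0$. In either case the bracket is nonnegative and $\F_{i,K_0,\sigma}^k\le 0$.

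Summing over $\sigma\in\E_{K_0}$ then gives $m-u_{i,K_0}^{k-1}\ge 0$, i.e.\ $m\ge u_{i,K_0}^{k-1}\ge 0$, contradicting $m<0$; hence $u_{i,K}^k\ge 0$ for all $K$, which closes the induction. For the strict bound I would run the same argument under the stronger induction hypothesis $u_{i,K}^{k-1}>0$ (valid at $k=0$ since $u_i^I>0$, and consistent with the strict positivity of $\overline{u}_i$ on Dirichlet edges): knowing already that $m\ge 0$, I would assume $m=0$; the flux analysis above still yields $\sum_\sigma\F_{i,K_0,\sigma}^k\le 0$, so that $\m(K_0)(0-u_{i,K_0}^{k-1})/\dt\ge 0$ forces $u_{i,K_0}^{k-1}\le 0$, contradicting the induction hypothesis.

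I expect the main obstacle to be bookkeeping rather than conceptual: one must verify that every ingredient entering the flux at the minimizing cell---the clamped solvent mobility $u_{0,\sigma}^k$, the upwind concentration $u_{i,\sigma}^k$, and the hidden upwinding $\widehat{u}_{0,\sigma,i}^k$ inside $\V_{i,K_0,\sigma}^k$---has the correct sign, and in particular that the truncation is designed precisely so that the positive part $(u_{i,K_0}^k)^+$ annihilates the dangerous drift contribution exactly when $u_{i,K_0}^k\le 0$. Note that the sign of the potential differences $\mathrm{D}_{K_0,\sigma}(\Phi^k)$ and the actual value of $\V_{i,K_0,\sigma}^k$ are irrelevant to the argument: they only select which upwind branch is taken, and both branches have been arranged to produce a nonnegative bracket.
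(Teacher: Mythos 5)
Your proposal is correct and follows essentially the same route as the paper: induction in $k$, evaluation of the scheme at a minimizing cell, and the observation that the truncation $(u_{i,K}^k)^+$ kills the drift contribution in the upwind case $\V_{i,K,\sigma}^k\ge 0$ while the case $\V_{i,K,\sigma}^k<0$ gives a nonnegative term, so the right-hand side is nonnegative and contradicts the strictly negative left-hand side. Your explicit treatment of the Dirichlet/Neumann edges and of the strict-positivity case merely fills in details the paper leaves as ``similar arguments.''
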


\begin{proof}
We proceed by induction. For $k=0$, the nonnegativity holds because of our
assumptions on the initial data. Assume that $u_{i,L}^{k-1}\ge 0$ for all
$L\in\T$. Then let $u_{i,K}^k = \min\{u_{i,L}^k:L\in\T\}$ for some
$K\in\T$ and assume that $u_{i,K}^k<0$. The scheme writes as 
\begin{equation}\label{3.aux}
  \m(K)\frac{u_{i,K}^k-u_{i,K}^{k-1}}{\dt}
	= \sum_{\sigma\in\E_K}\tau_\sigma D_i\big(u_{0,\sigma}^k\text{D}_{K,\sigma}(u_i^k)
	- u_{i,\sigma}^k\V_{i,K,\sigma}^k\big).
\end{equation}
By assumption, $\text{D}_{K,\sigma}(u_i^k)\ge 0$.
If $\V_{i,K\sigma}^k\ge 0$, we have
$-u_{i,\sigma}^k\V_{i,K,\sigma}^k = -(u_{i,K})^+\V_{i,K,\sigma}=0$ and if
$\V_{i,K,\sigma}^k<0$, it follows that $-u_{i,\sigma}^k\V_{i,K,\sigma}^k
= -(u_{i,K,\sigma}^k)^+\V_{i,K,\sigma}^k\ge 0$. Hence, the right-hand side of
\eqref{3.aux} nonnegative. However, the left-hand side is negative, which is a 
contradiction. We infer that $u_{i,K}^k\ge 0$ and consequently,
$u_{i,L}^k\ge 0$ for all $L\in\T$. 
When the initial data are positive, similar arguments
show the positivity of $u_{i,L}^k$ for $L\in\T$.
\end{proof}

We are able to show the nonnegativity of $u_{0,K}^k=1-\sum_{i=1}^n u_{i,K}^k$ 
only if the diffusion
coefficients are the same. The reason is that we derive an equation for $u_{0,K}^k$
by summing \eqref{2.equ} for $i=1,\ldots,n$, and this gives an equation for
$u_{0,K}^k$ only if $D_i=D$ for all $i=1,\ldots,n$.

\begin{lemma}[Nonnegativity of $u_0^k$]\label{lem.u0}
Let Assumption (A2) hold and let $(u,\Phi)$ be a solution to 
\eqref{2.equ}, \eqref{2.eqF}, \eqref{2.poi}, and \eqref{3.trunc}.
Then $u_{0,K}^k\ge 0$ for all $K\in\T$, $k\in\{1,\ldots,N\}$. 
If $u_0^I>0$ and $\overline{u}_i>0$ then also 
$u_{0,K}^k>0$ for all $K\in\T$, $k\in\{1,\ldots,N\}$.
\end{lemma}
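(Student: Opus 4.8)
The plan is to mimic the discrete minimum-principle argument of Lemma~\ref{lem.ui}, but applied to the discrete equation satisfied by $u_0^k$ that is obtained by summing the scheme over the species. First I would sum \eqref{2.equ}, \eqref{2.eqF} over $i=1,\ldots,n$ and use Assumption~(A2), i.e.\ $D_i=D$, together with $u_{0,K}^k = 1-\sum_{i=1}^n u_{i,K}^k$ (legitimate because Lemma~\ref{lem.ui} already gives $u_{i,K}^k\ge 0$, so the truncation $(u_{i,K}^k)^+$ in \eqref{3.trunc} is inactive at cell values). Since the edge increments telescope, $\sum_{i=1}^n \text{D}_{K,\sigma}(u_i^k) = -\text{D}_{K,\sigma}(u_0^k)$, and after inserting the definition \eqref{2.V} of $\V_{i,K,\sigma}^k$ this produces the discrete drift--diffusion identity
\begin{align*}
  \m(K)\frac{u_{0,K}^k - u_{0,K}^{k-1}}{\dt}
  &= D\sum_{\sigma\in\E_K}\tau_\sigma\Big(u_{0,\sigma}^k + \sum_{i=1}^n u_{i,\sigma}^k\Big)\text{D}_{K,\sigma}(u_0^k) \\
  &\quad {}- D\beta\sum_{\sigma\in\E_K}\tau_\sigma\sum_{i=1}^n z_i\, u_{i,\sigma}^k\,\widehat{u}_{0,\sigma,i}^k\,\text{D}_{K,\sigma}(\Phi^k).
\end{align*}

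Then I would argue by induction on $k$, the base case $k=0$ being guaranteed by (H4). Suppose $u_{0,L}^{k-1}\ge 0$ for all $L\in\T$, let $K$ realize the minimum of $(u_{0,L}^k)_{L\in\T}$, and suppose for contradiction that $u_{0,K}^k<0$. The left-hand side is then strictly negative, so it suffices to show that the right-hand side is nonnegative, which I would do edge by edge. The diffusion part is the easy one: the truncated mobility satisfies $u_{0,\sigma}^k\ge 0$, each $u_{i,\sigma}^k\ge 0$ by \eqref{3.trunc}, and $\text{D}_{K,\sigma}(u_0^k)\ge 0$ because $K$ is a minimizer (for interior edges $u_{0,K,\sigma}^k=u_{0,L}^k\ge u_{0,K}^k$), while for Dirichlet edges $u_{0,K,\sigma}^k=\overline{u}_{0,\sigma}\ge 0 > u_{0,K}^k$ and for Neumann edges the increment vanishes.

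The main obstacle, and the real crux, is the drift term $-D\beta z_i u_{i,\sigma}^k\widehat{u}_{0,\sigma,i}^k\text{D}_{K,\sigma}(\Phi^k)$, whose sign is a priori indefinite. Here the \emph{double} upwinding of $\widehat{u}_{0,\sigma,i}^k$ in \eqref{3.trunc} is exactly what rescues the estimate: if $z_i\text{D}_{K,\sigma}(\Phi^k)\ge 0$ then $\widehat{u}_{0,\sigma,i}^k=(u_{0,K}^k)^+=0$ at the minimizer (since $u_{0,K}^k<0$), so this contribution simply vanishes; and if $z_i\text{D}_{K,\sigma}(\Phi^k)<0$ then $\widehat{u}_{0,\sigma,i}^k=(u_{0,K,\sigma}^k)^+\ge 0$, while $-z_i\text{D}_{K,\sigma}(\Phi^k)>0$ and $u_{i,\sigma}^k\ge 0$, so the contribution is nonnegative. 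The upshot is that precisely when $u_0$ tries to become negative, the truncation kills the ``wrong-sign'' drift flux. Summing over $i$ and $\sigma$ shows the whole right-hand side is $\ge 0$, contradicting the strictly negative left-hand side; hence $u_{0,K}^k\ge 0$.

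For the strict positivity I would run the identical argument under the sharpened induction hypothesis $u_{0,L}^{k-1}>0$, now testing against $u_{0,K}^k\le 0$: the truncation still forces $(u_{0,K}^k)^+=0$, so the right-hand side is again $\ge 0$, whereas the left-hand side is now strictly negative, yielding $u_{0,K}^k>0$. The base case uses $u_0^I>0$ (whence $u_{0,K}^0 = \m(K)^{-1}\int_K u_0^I>0$) together with the positivity of the boundary data entering the Dirichlet edges. I expect the only delicate bookkeeping to be the careful case distinction over interior, Dirichlet, and Neumann edges in the sign analysis above; everything else is the standard implicit-Euler discrete minimum principle.
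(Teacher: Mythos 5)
Your proposal is correct and follows essentially the same route as the paper: sum the scheme over $i=1,\ldots,n$ using (A2), apply the discrete minimum principle at a negative minimizer, observe that the diffusion contributions are nonnegative there, and use the truncation $(u_{0,K}^k)^+=0$ together with the upwind choice of $\widehat{u}_{0,\sigma,i}^k$ to kill or sign the drift contributions. Your additional bookkeeping (explicitly noting that Lemma \ref{lem.ui} deactivates the truncation at cell values, the Dirichlet/Neumann edge cases, and the strict-positivity variant) only spells out what the paper leaves implicit.
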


\begin{proof}
Again, we proceed by induction. The case $k=0$ follows from the assumptions.
Assume that $u_{0,L}^{k-1}\ge 0$ for all $L\in\T$. Then let $u_{0,K}^k
= \min\{u_{0,L}^k:L\in\T\} $ for some $K\in\T$ and assume that $u_{0,K}^k<0$. Summing equations \eqref{2.equ}
from $i=1,\ldots,n$, we obtain
\begin{align}
  \m(K)\frac{u_{0,K}^k-u_{0,K}^{k-1}}{\dt}
	&= D\sum_{\sigma\in\E_K}\tau_\sigma\Big(u_{0,\sigma}^k\text{D}_{K,\sigma}(u_0^k)
	+ \sum_{i=1}^n u_{i,\sigma}^k\big(\text{D}_{K,\sigma}(u_0^k) 
	- \beta z_i\widehat{u}_{0,\sigma,i}^k\text{D}_{K,\sigma}(\Phi^k)\big)\Big) \nonumber\\
	&\ge -D\sum_{\sigma\in\E_K}\tau_\sigma 
	\sum_{i=1}^n\beta z_i\widehat{u}_{0,\sigma,i}^k\text{D}_{K,\sigma}(\Phi^k),
	\label{3.aux2}
\end{align}
since $u_{0,\sigma}^k\ge 0$ and $u_{i,\sigma}^k\ge 0$ by construction
and $\text{D}_{K,\sigma}(u_0^k)\ge 0$ because of the minimality property
of $u_{0,K}^k$. The remaining expression is nonnegative:
$$
  -\widehat u^k_{0,\sigma_i}z_i\text{D}_{K,\sigma}(\Phi^k) = \begin{cases}
  -(u^k_{0,K})^+z_i\text{D}_{K,\sigma}(\Phi^k)=0 \quad 
	& \text{if }z_i\text{D}_{K,\sigma}(\Phi^k)\ge 0, \\
  -(u^k_{0,L})^+z_i\text{D}_{K,\sigma}(\Phi^k)\ge 0 
	& \text{if }z_i\text{D}_{K,\sigma}(\Phi^k)< 0.
 \end{cases}
$$
However, the left-hand side of \eqref{3.aux2} is negative, by induction hypothesis,
which gives a contradiction.
\end{proof}

Lemmas \ref{lem.ui} and \ref{lem.u0} imply that we may remove the truncation 
in \eqref{3.trunc}. Moreover, by definition, we have $1-\sum_{i=1}^n u_{i,K}^k
=u_{0,K}^k\ge 0$ such that $u_K^k=(u_{1,K}^k,\ldots,u_{n,K}^k)\in\overline{\D}$
or, if the initial and boundary data are positive, $u_K^k\in\D$.

\begin{proposition}[Existence for the numerical scheme]\label{prop.ex}
Let Assumption (A2) hold. 
Then scheme \eqref{2.equ}-\eqref{2.poi} has a solution
$(u,\Phi)$ which satisfies $u_K^k\in\overline{\D}$ for all $K\in\T$ and $k\in\N$.
\end{proposition}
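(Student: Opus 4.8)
The plan is to fix the time level $k$ and argue by induction, assuming that a nonnegative solution $(u_{i,K}^{k-1})$ is already available (the base case being the nonnegative initial data). For fixed $k$, the Poisson equation \eqref{2.poi} is a linear system for $(\Phi_K^k)_{K\in\T}$ whose right-hand side depends affinely on $(u_{i,K}^k)$; when $\Gamma_D\neq\emptyset$ the associated discrete Laplacian is symmetric positive definite, and in the pure Neumann case only the differences $\text{D}_{K,\sigma}(\Phi^k)$ enter \eqref{2.eqF}, so after fixing a gauge (e.g.\ $\sum_K\Phi_K^k=0$) the map $u^k\mapsto\Phi^k(u^k)$ is well defined and continuous in either regime. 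Substituting $\Phi^k(u^k)$ into the fluxes reduces the truncated scheme \eqref{2.equ}, \eqref{2.eqF}, \eqref{2.poi}, \eqref{3.trunc} to finding a zero $X=(u_{i,K}^k)\in\R^{n\#\T}$ of a continuous map, and I would establish existence of such a zero by a topological degree argument in the spirit of \cite{Dei85,EGGH98}.

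Concretely, introduce the homotopy $\mathcal H\colon[0,1]\times\R^{n\#\T}\to\R^{n\#\T}$,
\begin{equation*}
  \mathcal H(\rho,X)_{i,K} = \frac{\m(K)}{\dt}\big(u_{i,K}^k-u_{i,K}^{k-1}\big)
  + \rho\sum_{\sigma\in\E_K}\F_{i,K,\sigma}^k,
\end{equation*}
so that $\mathcal H(1,\cdot)$ is the potential-eliminated scheme while $\mathcal H(0,\cdot)$ is the affine map $X\mapsto(\m(K)/\dt)(X-u^{k-1})$. The decisive observation is that the a priori bound needed to run the degree argument comes for free: the proofs of Lemma \ref{lem.ui} and Lemma \ref{lem.u0} use the scheme only through the sign of $\sum_\sigma\F_{i,K,\sigma}^k$ at an interior extremum, and since $\rho\ge 0$ the very same minimum-principle computation applies verbatim to any zero of $\mathcal H(\rho,\cdot)$. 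Hence every such zero satisfies $u_{i,K}^k\ge 0$ and $u_{0,K}^k=1-\sum_i u_{i,K}^k\ge 0$, i.e.\ $0\le u_{i,K}^k\le 1$, uniformly in $\rho$. Choosing a ball $B_R$ with $R>\sqrt{n\#\T}$ therefore places all zeros strictly inside $B_R$ for every $\rho$, so $\mathcal H(\rho,\cdot)$ never vanishes on $\partial B_R$ and its Brouwer degree is well defined and invariant along the homotopy.

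It then remains to evaluate the degree at $\rho=0$, where $\mathcal H(0,\cdot)$ is linear with diagonal matrix $\mathrm{diag}(\m(K)/\dt)$ of positive determinant and unique zero $X=u^{k-1}\in B_R$; thus $\deg(\mathcal H(0,\cdot),B_R,0)=1$, and by homotopy invariance $\deg(\mathcal H(1,\cdot),B_R,0)=1\neq 0$. This yields a zero $X\in B_R$ of the truncated scheme, and applying Lemmas \ref{lem.ui} and \ref{lem.u0} once more (now at $\rho=1$) shows $u_{i,K}^k\ge 0$ and $u_{0,K}^k\ge 0$, so all the positive parts and maxima in \eqref{3.trunc} are inactive and $X$ in fact solves the original scheme \eqref{2.equ}--\eqref{2.poi} with $u_K^k\in\overline\D$, closing the induction. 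The main obstacle I anticipate is technical rather than conceptual: one must verify that $\mathcal H$ is genuinely continuous across the upwinding switches — at $\V_{i,K,\sigma}^k=0$ and at $z_i\text{D}_{K,\sigma}(\Phi^k)=0$ the two branches in \eqref{2.hat} and \eqref{3.trunc} agree because the corresponding product vanishes — and that the elimination of $\Phi^k$ is a legitimate continuous reduction in both the mixed and the pure-Neumann regimes. Once continuity is secured, the a priori estimate is immediate from the sign-preservation lemmas, so that no separate energy estimate is required.
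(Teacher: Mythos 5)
Your proposal follows essentially the same route as the paper: solve the truncated scheme \eqref{3.trunc} via a homotopy in $\rho$ multiplying the flux terms, obtain the uniform a priori bound from the minimum-principle arguments of Lemmas \ref{lem.ui} and \ref{lem.u0} (which apply for every $\rho\in[0,1]$), invoke topological degree (the paper cites the fixed-point theorem of \cite[Theorem 5.1]{EGH08}) with the affine map at $\rho=0$, and then remove the truncation. The argument is correct and matches the paper's proof of Proposition \ref{prop.ex} in all essential respects.
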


\begin{proof}
We argue by induction. For $k=0$, we have $u_K^0\in\overline{\D}$ by assumption.
The function $\Phi^0$ is uniquely determined by scheme \eqref{2.poi}, as this is a 
linear system of equations with positive definite matrix. Assume the existence
of a solution $(u^{k-1},\Phi^{k-1})$ with $u_K^{k-1}\in\overline{\D}$.
Let $m\in\N$ be the product of the number of species $n$ and the number of
cells $K\in\T$. 
For given $K\in\T$ and $i=1,\ldots,n$, we define the function
$F_{i,K}:\R^m\times[0,1]\to\R$ by
\begin{align*}
  F_{i,K}(u,\rho) &= \m(K)\frac{u_{i,K}-u_{i,K}^{k-1}}{\dt} \\
	&\phantom{xx}{}- \rho D\sum_{\sigma\in\E_K}\tau_\sigma\Big(u_{0,\sigma}
	\text{D}_{K,\sigma}(u_i) - u_{i,\sigma}\big(\text{D}_{K,\sigma}(u_0)
	- \widehat{u}_{0,\sigma,i}\beta z_i\text{D}_{K,\sigma}(\Phi)\big)\Big).
\end{align*}
where $u_{0,K}$, $u_{i,\sigma}$, $u_{0,\sigma}$, and $\widehat u_{0,\sigma_i}$
are defined in \eqref{3.trunc}, and $\Phi$ is uniquely determined 
by \eqref{2.poi}.
Let $F=(F_{i,K})_{i=1,\ldots,n,\,K\in\T}$. Then $F:\R^m\times[0,1]\to\R^m$
is a continuous function. We wish to apply the fixed-point theorem of 
\cite[Theorem 5.1]{EGH08}. For this, we need to verify three assumptions:
\begin{itemize}
\item The function $u\mapsto F_{i,K}(u,0) = \m(K)(u_{i,K}-u_{i,K}^{k-1})/\dt$ is affine.
\item We have proved above that any solution to $F(u,1)=0$ satisfies $u\in\D$
or $\|u\|_\infty<2$.
A similar proof shows that any solution to $F(u,\rho)=0$ with $\rho\in(0,1)$
satisfies $\|u\|_\infty<2$, too.
\item The equation $F(u,0)=0$ has the unique solution $u=u^{k-1}$ and consequently,
$\|u\|_\infty=\|u^{k-1}\|_\infty<2$. 
\end{itemize}
We infer the existence of a solution $u^k$ to $F(u^k,1)=0$ satisfying
$\|u^k\|_\infty<2$. In fact, by Lemmas \ref{lem.ui} and \ref{lem.u0},
we find that $u^k\in\overline\D$. Hence, $u^k$ solves the original scheme 
\eqref{2.equ}-\eqref{2.poi}.
\end{proof}


\subsection{Uniqueness of solutions}\label{sec.uni}

The proof of Theorem \ref{thm.ex} is completed when we show the uniqueness
of solutions to scheme \eqref{2.equ}-\eqref{2.poi} under the additional conditions (A1) and
(A3). Recall that in this case, the scheme is given by 
\eqref{2.simpl1}-\eqref{2.simpl2},

{\em Step 1: uniqueness for $u_0$.} If $k=0$, the solution is uniquely determined
by the initial condition. Assume that $u_0^{k-1}$ is given. Thanks to Assumptions
(A2)-(A3), the sum of \eqref{2.simpl1}-\eqref{2.simpl2} for $i=1,\ldots,n$ 
gives an equation for
$u_0^k=1-\sum_{i=1}^n u_i^k$ (in the following, we omit the superindices $k$):
\begin{align*}
  \m(K)\frac{u_{0,K}-u_{0,K}^{k-1}}{\dt}
	&= -D\sum_{\sigma\in\E_{K,\rm int}}\tau_\sigma(u_{0,K}-u_{0,L})
	\bigg(u_{0,\sigma} + \sum_{i=1}^n u_{i,\sigma}\bigg) \\
	&= -D\sum_{\sigma\in\E_{K,\rm int}}\tau_\sigma(u_{0,K}-u_{0,L})
	\big(1 + |u_{0,K}-u_{0,L}|\big),
\end{align*}
where we used \eqref{2.sum} in the last step. 

Let $u_0$ and $v_0$ be two solutions to the previous equation and set $w_0:=u_0-v_0$. 
Then $w_0$ solves
\begin{align*}
  0 &= \m(K)\frac{w_{0,K}}{\dt} + D\sum_{\sigma\in\E_{K,\rm int}}
	\tau_\sigma(w_{0,K}-w_{0,L}) \\
	&\phantom{xx}{}+ D\sum_{\sigma\in\E_{K,\rm int}}\tau_\sigma
	\big((u_{0,K}-u_{0,L})|u_{0,K}-u_{0,L}| - (v_{0,K}-v_{0,L})|v_{0,K}-v_{0,L}|\big).
\end{align*}
We multiply this equation by $w_{0,K}/D$, sum over $K\in\T$, and use discrete
integration by parts \eqref{2.ibp}:
\begin{align*}
  0 &= \sum_{K\in\T}\frac{\m(K)}{D}\frac{w_{0,K}^2}{\dt}
	+ \sum_{\sigma=K|L\in\E_{\rm int}}\tau_\sigma(w_{0,K}-w_{0,L})^2 \\
	&\phantom{xx}{} + \sum_{\sigma=K|L\in\E_{\rm int}}\tau_\sigma
	\big((u_{0,K}-u_{0,L})|u_{0,K}-u_{0,L}| - (v_{0,K}-v_{0,L})|v_{0,K}-v_{0,L}|\big)
	(w_{0,K}-w_{0,L}).
\end{align*}
The first two terms on the right-hand side are clearly nonnegative.
We infer from the elementary inequality
$(y|y|-z|z|)(y-z)\ge 0$ for $y$, $z\in\R$, which is a consequence of the
monotonicity of $z\mapsto z|z|$, that the third term is nonnegative, too.
Consequently, the three terms must vanish and this implies that $w_{0,K}=0$
for all $K\in\T$. This shows the uniqueness for $u_0$.

{\em Step 2: uniqueness for $u_i$.} 
Let $u_0$ be the uniquely determined solution from the previous step
and let $u^k=(u_1^k,\ldots,u_n^k)$ and $v^k=(v_1^k,\ldots,v_n^k)$ be two solutions
to \eqref{2.equ}. Similarly as in \cite{Gaj94}, we introduce the semimetric
\begin{align*}
  & d_\eps(u^k,v^k) = \sum_{K\in\T}\m(K)\sum_{i=1}^n H_1^\eps(u_{i,K}^k,v_{i,K}^k), 
	\quad\mbox{where} \\
	& H_1^\eps(a,b) = h_\eps(a)+h_\eps(b)
	- 2h_\eps\bigg(\frac{a+b}{2}\bigg)
\end{align*}
and $h_\eps(z) = (z+\eps)(\log(z+\eps)-1)+1$. The parameter $\eps>0$ is needed
since $u^k_{i,K}$ or $v^k_{i,K}$ may vanish and then the logarithm of $u^k_{i,K}$ or
$v^k_{i,K}$ may be undefined. 
The objective is to verify that $\lim_{\eps\to 0}d_\eps(u^k,v^k)=0$ by estimating the
discrete time derivative of the semimetric, implying that $u^k=v^k$.

First, we write
$$
  d_\eps(u^k,v^k) - d_\eps(u^{k-1},v^{k-1})
	= \sum_{K\in\T}\m(K)\sum_{i=1}^n\big(H_1^\eps(u_{i,K}^k,v_{i,K}^k)
	- H_1^\eps(u_{i,K}^{k-1},v_{i,K}^{k-1})\big).
$$
The function $H_1^\eps$ is convex since
$$
  D^2 H_1^\eps(a,b) = \frac{1}{(a+\eps)(b+\eps)(a+b+2\eps)}
	\begin{pmatrix}
	(b+\eps)^2 & -(a+\eps)(b+\eps) \\
	-(a+\eps)(b+\eps) & (a+\eps)^2
	\end{pmatrix}.
$$
Therefore, a Taylor expansion of $H_1^\eps$ around $(u_{i,K}^k,v_{i,K}^k)$ 
leads to
\begin{align*}
  \frac{1}{\dt} & \big(d_\eps(u^k,v^k) - d_\eps(u^{k-1},v^{k-1})\big) \\
	&\le \sum_{K\in\T}\frac{\m(K)}{\dt}\sum_{i=1}^n\bigg\{DH_1^\eps(u_{i,K}^k,v_{i,K}^k)
	\bigg(\begin{pmatrix} u^{k}_{i,K} \\ v^{k}_{i,K} \end{pmatrix} 
	- \begin{pmatrix} u^{k-1}_{i,K} \\ v^{k-1}_{i,K} \end{pmatrix}\bigg)\bigg\} \\
	&= \sum_{i=1}^n\sum_{K\in\T}\m(K)\frac{u_{i,K}^k-u_{i,K}^{k-1}}{\dt}
	\bigg(h_\eps'(u_{i,K}^k) - h_\eps'\bigg(\frac{u_{i,K}^k+v_{i,K}^k}{2}\bigg)\bigg) \\
	&\phantom{xx}{}+ \sum_{i=1}^n\sum_{K\in\T}\m(K)\frac{v_{i,K}^k-v_{i,K}^{k-1}}{\dt}
	\bigg(h_\eps'(v_{i,K}^k) - h_\eps'\bigg(\frac{u_{i,K}^k+v_{i,K}^k}{2}\bigg)\bigg).
\end{align*}
We insert the scheme \eqref{2.simpl1}-\eqref{2.simpl2} and use discrete integration by parts:
$$
  \frac{1}{\dt}\big(d_\eps(u^k,v^k) - d_\eps(u^{k-1},v^{k-1})\big) 
  \le S_1^k + S_2^k + \eps S_3^k,
$$
where
\begin{align*}
  S_1^k &= -D\sum_{i=1}^n\sum_{\sigma=K|L\in\E_{\rm int}}\tau_\sigma u_{0,\sigma}^k\bigg\{
	\big(u_{i,K}^k-u_{i,L}^k\big)\big(\log(u_{i,K}^{k}+\eps)
	-\log (u_{i,L}^{k}+\eps)\big) \\
	&\phantom{xx}{}
	+ \big(v_{i,K}^k-v_{i,L}^k\big)\big(\log (v_{i,K}^{k}+\eps)
	-\log (v_{i,L}^{k}+\eps)\big) \\
	&\phantom{xx}{}
	- 2\bigg(\frac{u_{i,K}^k+v_{i,K}^k}{2}-\frac{u_{i,L}^k+v_{i,L}^k}{2}\bigg)
	\bigg(\log\bigg(\frac{u_{i,K}^{k}+v_{i,K}^{k}}{2}+\eps\bigg)
	- \log\bigg(\frac{u_{i,L}^{k}+v_{i,L}^{k}}{2}+\eps\bigg)\bigg)\bigg\}, \\
	S_2^k &= D\sum_{i=1}^n\sum_{\sigma=K|L\in\E_{\rm int}}\tau_\sigma(u_{0,K}^k-u_{0,L}^k)
	\bigg\{(u_{i,\sigma}^{k}+\eps)\big(\log (u_{i,K}^{k}+\eps)
	-\log (u_{i,L}^{k}+\eps)\big) \\
	&\phantom{xx}{}
	+ (v_{i,\sigma}^{k}+\eps)\big(\log (v_{i,K}^{k}+\eps)
	-\log (v_{i,L}^{k}+\eps)\big) \\
	&\phantom{xx}{}
	- 2\bigg(\frac{u_{i,\sigma}^{k}+v_{i,\sigma}^{k}}{2}+\eps\bigg)\bigg(
	\log\bigg(\frac{u_{i,K}^{k}+v_{i,K}^{k}}{2}+\eps\bigg) 
	- \log\bigg(\frac{u_{i,L}^{k}+v_{i,L}^{k}}{2}+\eps\bigg)\bigg)\bigg\}, \\
	S_3^k &= -D\sum_{i=1}^n\sum_{\sigma=K|L\in\E_{\rm int}}\tau_\sigma
	(u_{0,K}^k-u_{0,L}^k)
	\bigg\{\big(\log (u_{i,K}^{k}+\eps)-\log (u_{i,L}^{k}+\eps)\big) \\
	&\phantom{xx}{}
	+ \big(\log (v_{i,K}^{k}+\eps)-\log (v_{i,L}^{k}+\eps)\big) \\
	&\phantom{xx}{}
	-2 \bigg(\log\bigg(\frac{u_{i,K}^{k}+v_{i,K}^{k}}{2}+\eps\bigg)
	- \log\bigg(\frac{u_{i,L}^{k}+v_{i,L}^{k}}{2}+\eps\bigg)\bigg)\bigg\}
\end{align*}

We claim that $S_1^k\le 0$ and $S_2^k\le 0$. Indeed, with the definition
$H_2^\eps(a,b)=(a-b)(\log(a+\eps)-\log(b+\eps))$, we can reformulate $S_1^k$ as
\begin{align*}
  S_1^k &= -D\sum_{i=1}^n\sum_{\sigma=K|L\in\E_{\rm int}}\tau_\sigma u_{0,\sigma}^k
	\bigg\{H_2^\eps\big(u_{i,K}^k,u_{i,L}^k\big) + H_2^\eps\big(v_{i,K}^k,v_{i,L}^k\big) \\
	&\phantom{xx}{}
	- 2H_2^\eps\bigg(\frac{u_{i,K}^k+v^k_{i,K}}{2},\frac{u_{i,L}^k+v_{i,L}^k}{2}\bigg)
	\bigg\}.
\end{align*}
The Hessian of $H_2^\eps$,
$$
  D^2H_2^\eps(a,b) = \begin{pmatrix}
	\frac{a+b+2\eps}{(a+\eps)^2} & -\frac{a+b+2\eps}{(a+\eps)(b+\eps)} \\
	-\frac{a+b+2\eps}{(a+\eps)(b+\eps)} & \frac{a+b+2\eps}{(b+\eps)^2}
	\end{pmatrix},
$$
is positive semidefinite. Therefore, performing a Taylor expansion up to second
order, we see that $S_1^k\le 0$. 

Next, we show that $S_2^k\le 0$. For this, we assume without loss of generality
for some fixed $\sigma=K|L$ that $u_{0,K}^k\le u_{0,L}^k$. By definition of the scheme,
$u_{i,\sigma}^k=u_{i,K}^k$ and $v_{i,\sigma}^k=v_{i,K}^k$. 
Set $H_3^\eps(a,b)=(a+\eps)(\log(a+\eps)-\log(b+\eps))$. The term in the curly  
bracket in $S_2^k$ then takes the form
\begin{equation}\label{3.H3}
  \big(u_{0,K}^k-u_{0,L}^k\big)\bigg\{H_3^\eps\big(u_{i,K}^k,u_{i,L}^k\big)
	+ H_3^\eps\big(v_{i,K}^k,v_{i,L}^k\big)
	- 2H_3^\eps\bigg(\frac{u_{i,K}^k+v_{i,K}^k}{2},\frac{u_{i,L}^k+v_{i,L}^k}{2}\bigg)
	\bigg\}.
\end{equation}
The Hessian of $H_3^\eps$,
$$
  D^2H_3^\eps(a,b) = \begin{pmatrix}
	\frac{1}{a+\eps} & -\frac{1}{b+\eps} \\
	-\frac{1}{b+\eps} & \frac{a+\eps}{(b+\eps)^2}
	\end{pmatrix},
$$
is also positive semidefinite, showing that \eqref{3.H3} is nonpositive
as $u_{0,K}^k-u_{0,L}^k\le 0$.
If $u_{0,K}^k>u_{0,L}^k$, both factors of the product \eqref{3.H3} change their
sign, so that we arrive at the same conclusion. Hence, $S_2^k\le 0$. We conclude that
$$
  d_\eps(u^k,v^k) - d_\eps(u^{k-1},v^{k-1}) \le \eps \dt S_3^k.
$$
Since $d_\eps(u^0,v^0)=0$, we find after resolving the recursion that
$$
  d_\eps(u^k,v^k) \le \eps\dt\sum_{\ell=1}^k S_3^\ell.
$$
As the densities $u_{i,K}^\ell$ are nonnegative and bounded by 1 for all $K\in\T$, for all $\ell\geq 0$ and for all $1\leq i\leq n$, it is clear that $\sum_{\ell=1}^k\eps S_3^\ell \to 0$ when $\eps\to 0$.
Then, we may perform the limit
$\eps\to 0$ in the previous inequality yielding $d_\eps(u^k,v^k)\to 0$.
A Taylor expansion as in \cite[end of Section 6]{ZaJu17}
shows that $d_\eps(u^k,v^k)\ge\frac18\sum_{K\in\T}
\m(K)\sum_{i=1}^n(u^k_{i,K}-v^k_{i,K})^2$. We infer that $u^k=v^k$, finishing the proof.


\section{Discrete entropy inequality and uniform estimates}\label{sec.est}

\subsection{Discrete entropy inequality}\label{sec.dei}

First, we prove \eqref{1.epi}.

\begin{proof}[Proof of Theorem \ref{thm.ent}]
The idea is to multiply \eqref{2.equ} by $\log(u_{i,K}^{k,\eps}/u_{0,K}^{k,\eps})$,
where $u_{i,K}^{k,\eps}:=u_{i,K}^k+\eps$ for $i=0,\ldots,n$. 
The regularization is necessary
to avoid issues when the concentrations vanish. After this multiplication, we sum
the equations over $i=1,\ldots,n$ and $K\in\T$ and use discrete integration 
by parts to obtain
\begin{align}
  0 &= \sum_{K\in\T}\frac{\m(K)}{\dt D}\sum_{i=1}^n(u_{i,K}^k-u_{i,K}^{k-1})
	\log\frac{u_{i,K}^{k,\eps}}{u_{0,K}^{k,\eps}} \nonumber \\
	&\phantom{xx}{}+ \sum_{\sigma=K|L\in\E_{\rm int}}\tau_\sigma
	\Big(u_{0,\sigma}^k\big(u_{i,K}^k-u_{i,L}^k\big)
	- u_{i,\sigma}^k\big(u_{0,K}^k-u_{0,L}^k\big)\Big)
	\bigg(\log\frac{u_{i,K}^{k,\eps}}{u_{0,K}^{k,\eps}}
	- \log\frac{u_{i,L}^{k,\eps}}{u_{0,L}^{k,\eps}}\bigg) \label{3.AB} \\
	&= A_0 + \sum_{\sigma=K|L\in\E_{\rm int}}\tau_\sigma(A_1+A_2+B_1+B_2),
	\nonumber
\end{align}
where
\begin{align*}
  A_0 &= \sum_{K\in\T}\frac{\m(K)}{\dt D}\sum_{i=0}^n
	(u_{i,K}^{k,\eps}-u_{i,K}^{k-1,\eps})
	\log u_{i,K}^{k,\eps}, \\
	A_1 &= \sum_{i=1}^n u_{0,\sigma}^k\big(u_{i,K}^{k,\eps}-u_{i,L}^{k,\eps}\big)
	\big(\log u_{i,K}^{k,\eps} - \log u_{i,L}^{k,\eps}\big), \\
	A_2 &= -\sum_{i=1}^n u_{0,\sigma}^k\big(u_{i,K}^k-u_{i,L}^k\big)
	\big(\log u_{0,K}^{k,\eps} - \log u_{0,L}^{k,\eps}\big), \\
	B_1 &= -\sum_{i=1}^n u_{i,\sigma}^k\big(u_{0,K}^k-u_{0,L}^k\big)
	\big(\log u_{i,K}^{k,\eps} - \log u_{i,L}^{k,\eps}\big), \\
  B_2 &= \sum_{i=1}^n u_{i,\sigma}^k\big(u_{0,K}^k-u_{0,L}^k\big)
	\big(\log u_{0,K}^{k,\eps} - \log u_{0,L}^{k,\eps}\big).
\end{align*}
The convexity of $h(z)=z(\log z-1)+1$ implies the inequality
$h(u)-h(v)\le h'(u)(u-v)$ for all $u$, $v\in\R$. Consequently,
$$
  A_0 \ge \sum_{K\in\T}\frac{\m(K)}{\dt D}\sum_{i=0}^n
	\big(u_{i,K}^{k,\eps}(\log u_{i,K}^{k,\eps}-1)
	- u_{i,K}^{k-1,\eps}(\log u_{i,K}^{k-1,\eps}-1)\big).
$$

In order to estimate the remaining terms, we recall two elementary inequalities.
Let $y$, $z>0$. Then, by the Cauchy-Schwarz inequality,
\begin{equation}\label{3.ineq1}
  \big(\sqrt{y}-\sqrt{z}\big)^2 
	= \bigg(\int_z^y \frac{ds}{2\sqrt{s}}\bigg)^2
	\le \int_z^y \frac{ds}{4}\int_z^y\frac{ds}{s} = \frac14(y-z)(\log y-\log z),
\end{equation}
and by the concavity of the logarithm,
\begin{equation}\label{3.ineq2}
  y(\log y-\log z) \ge y-z \ge z(\log y-\log z).
\end{equation}
Inequality \eqref{3.ineq1} shows that
$$
  A_1 \ge 4\sum_{i=1}^n u_{0,\sigma}^k\big((u_{i,K}^{k,\eps})^{1/2}
	- (u_{i,K}^{k,\eps})^{1/2}\big).
$$
We use the definition of $u_{0,K}^k=1-\sum_{i=1}^n u_{i,K}^k$ in $A_2$ to find that
$$
  A_2 = u_{0,\sigma}^k\big(u_{0,K}^k-u_{0,L}^k\big)
	\big(\log u_{0,K}^{k,\eps}-\log u_{0,L}^{k,\eps}\big).
$$

We rewrite $B_1$ by using the abbreviation $u_{i,\sigma}^{k,\eps}=u_{i,\sigma}^k+\eps$:
\begin{align*}
  B_1 &= -\sum_{i=1}^n u_{i,\sigma}^{k,\eps}\big(u_{0,K}^k-u_{0,L}^k\big)
	\big(\log u_{i,K}^{k,\eps}-\log u_{i,L}^{k,\eps}\big) \\
	&\phantom{xx}{}+ \eps\sum_{i=1}^n\big(u_{0,K}^k-u_{0,L}^k\big)
	\big(\log u_{i,K}^{k,\eps}-\log u_{i,L}^{k,\eps}\big) \\
	&=: B_{11} + \eps B_{12}.
\end{align*}
We apply inequality \eqref{3.ineq2} to $B_{11}$. Indeed, if 
$u_{0,K}^k\le u_{0,L}^k$, we have $u_{i,\sigma}^k=u_{i,K}^k$
and we use the first inequality in \eqref{3.ineq2}. If $u_{0,K}^k>u_{0,L}^k$
then $u_{i,\sigma}^k=u_{i,L}^k$ and we employ the second inequality in \eqref{3.ineq2}.
In both cases, it follows that
\begin{align*}
  B_{11} &\ge -\sum_{i=1}^n\big(u_{0,K}^k-u_{0,L}^k\big)
	\big(u_{i,k}^{k,\eps}-u_{i,L}^{k,\eps}\big) \\
	&= -\big(u_{0,K}^k-u_{0,L}^k\big)\sum_{i=1}^n\big(u_{i,k}^{k,\eps}-u_{i,L}^{k,\eps}\big)
	= \big(u_{0,K}^k-u_{0,L}^k\big)^2.
\end{align*}

Finally, we consider $B_2$. In view of Assumption (A3), equation \eqref{2.sum} gives
\begin{equation}\label{4.sum}
  \sum_{i=1}^n u_{i,\sigma}^k = 1-\min\{u_{0,K}^k,u_{0,L}^k\}\ge 1-u_{0,\sigma}^k,
\end{equation} 
and therefore, by \eqref{3.ineq1},
\begin{align*}
  B_2 &\ge \big(1-u_{0,\sigma}^k\big)\big(u_{0,K}^{k,\eps}-u_{0,L}^{k,\eps}\big)
	\big(\log u_{0,K}^{k,\eps}-\log u_{0,L}^{k,\eps}\big) \\
	&\ge 4\big((u_{0,K}^{k,\eps})^{1/2} - (u_{0,L}^{k,\eps})^{1/2}\big)^2
	- u_{0,\sigma}^k\big(u_{0,K}^{k}-u_{0,L}^{k}\big)
	\big(\log u_{0,K}^{k,\eps}-\log u_{0,L}^{k,\eps}\big).
\end{align*}
The last expression cancels with $A_2$ such that
$$
  A_2+B_2 \ge 4\big((u_{0,K}^{k,\eps})^{1/2} - (u_{0,L}^{k,\eps})^{1/2}\big)^2.
$$
Putting together the estimates for $A_0$, $A_1$, $B_1$, and $A_2+B_2$, we deduce from
\eqref{3.AB} that
\begin{align*}
  \sum_{K\in\T} & \frac{\m(K)}{\dt}\sum_{i=0}^n u_{i,K}^{k,\eps}(\log u_{i,K}^{k,\eps}-1)
	- \sum_{K\in\T}\frac{\m(K)}{\dt}\sum_{i=1}^n
	u_{i,K}^{k-1,\eps}(\log u_{i,K}^{k-1,\eps}-1) \\
	&\phantom{xx}{}+ D\sum_{\sigma=K|L\in\E_{\rm int}}\tau_\sigma\bigg\{
	4\sum_{i=1}^n u_{0,\sigma}\big((u_{i,K}^{k,\eps})^{1/2}-(u_{i,L}^{k,\eps})^{1/2}\big)^2
	\\
	&\phantom{xx}{}
	+ 4\big((u_{0,K}^{k,\eps})^{1/2}-(u_{0,L}^{k,\eps})^{1/2}\big)^2
	+ \big(u_{0,K}^k-u_{0,L}^k\big)^2\bigg\} \\
	&\le -\eps D\big(u_{0,K}^k-u_{0,L}^k\big)\sum_{i=1}^n
	\big(\log u_{i,K}^{k,\eps}-\log u_{i,L}^{k,\eps}\big).
\end{align*}
Since the right-hand side converges to zero as $\eps\to 0$, we infer that 
\eqref{1.epi} holds.
\end{proof}


\subsection{A priori estimates}

For the proof of the convergence result, we need estimates uniform in the
mesh size $h(\T)$ and time step $\dt$. The scheme provides uniform
$L^\infty$ bounds. Further bounds are derived from the discrete entropy
inequality of Theorem \ref{thm.ent}. We introduce the discrete time derivative 
for functions $v\in\mathcal{H}_{\T,\dt}$ by
\begin{equation}\label{4.dtime}
  \pa_t^\dt v^k = \frac{v^k-v^{k-1}}{\dt}, \quad k=1,\ldots,N.
\end{equation}

\begin{lemma}[A priori estimates]\label{lem.est}
Let (H1)-(H4) and (A1)-(A3) hold.
The solution $u$ to scheme \eqref{2.simpl1}-\eqref{2.simpl2} satisfies the following
uniform estimates:
\begin{align}
  \|u_0^{1/2}\|_{1,\T,\dt} + \|u_0^{1/2}u_i\|_{1,\T,\dt} &\le C, \quad i=1,\ldots,n, 
	\label{4.est1} \\
	\sum_{k=1}^N\dt\|\pa_t^\dt u^k_i\|_{-1,\T}^2 &\le C, \quad
	i=0,\ldots,n, \label{4.est2}
\end{align}
where the constant $C>0$ is independent of the mesh $\T$ and time step size $\dt$.
\end{lemma}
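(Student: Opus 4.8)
The plan is to extract both estimates from the discrete entropy inequality \eqref{1.epi} established in Theorem \ref{thm.ent}. First I would sum \eqref{1.epi} over $k=1,\dots,N$ and telescope, obtaining
\[
  H^N + \sum_{k=1}^N \dt\, I^k \le H^0.
\]
Since $u^k\in\overline{\D}$ gives uniform $L^\infty$ bounds on all the concentrations, $H^0$ is bounded independently of $\T$ and $\dt$, and $H^N$ is bounded below (the function $z\mapsto z(\log z-1)+1$ is bounded below by $0$ on $[0,1]$). Hence $\sum_k \dt\, I^k \le C$. Reading off the three nonnegative pieces of $I^k$ directly yields control of $\sum_k\dt\sum_\sigma \tau_\sigma u_{0,\sigma}^k((u_{i,K}^k)^{1/2}-(u_{i,L}^k)^{1/2})^2$, of $\sum_k\dt\sum_\sigma\tau_\sigma((u_{0,K}^k)^{1/2}-(u_{0,L}^k)^{1/2})^2$, and of $\sum_k\dt\sum_\sigma\tau_\sigma(u_{0,K}^k-u_{0,L}^k)^2$.

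To obtain \eqref{4.est1} I would match these three quantities against the discrete $H^1$ seminorm \eqref{2.norm1}. The middle piece is exactly the gradient part of $\|u_0^{1/2}\|_{1,\T,\dt}^2$, and the $L^2$ part of that norm is harmless because $0\le u_0^{1/2}\le 1$; this gives the bound on $\|u_0^{1/2}\|_{1,\T,\dt}$. For $\|u_0^{1/2}u_i\|_{1,\T,\dt}$ the key is to bound the edge differences $(u_{0,\sigma}^{1/2})(u_{i,K}^{1/2}-u_{i,L}^{1/2})$-type terms: I would use the discrete product-rule decomposition implicit in \eqref{2.Fsqrt}, writing $u_{0,K}^{1/2}u_{i,K}-u_{0,L}^{1/2}u_{i,L}$ as a sum of a term carrying $(u_{i,K}^{1/2}-u_{i,L}^{1/2})$ weighted by $u_{0,\sigma}^{1/2}$ and a term carrying $(u_{0,K}^{1/2}-u_{0,L}^{1/2})$ weighted by bounded factors. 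Squaring, using $(a+b)^2\le 2a^2+2b^2$, and invoking the elementary inequality $(\sqrt y-\sqrt z)^2\le(y^{1/2}-z^{1/2})(y^{1/2}+z^{1/2})$ together with the $L^\infty$ bounds $u_i\le 1$, these reduce precisely to the first and second entropy-production pieces already controlled. The mild technical point is the appearance of $u_{0,\sigma}^k=\max\{u_{0,K}^k,u_{0,L}^k\}$ versus $u_{0,K}^k,u_{0,L}^k$ individually, which is handled by the equivalence $u_{0,\sigma}^k\le u_{0,K}^k+u_{0,L}^k$ and the uniform bounds.

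For \eqref{4.est2} the strategy is duality: by the definition \eqref{2.norm-1} of the discrete $H^{-1}$ norm, I would bound $\|\pa_t^\dt u_i^k\|_{-1,\T}$ by testing the scheme \eqref{2.simpl1} against an arbitrary $\varphi\in\mathcal{H}_\T$ with $\|\varphi\|_{1,\T}=1$. Multiplying \eqref{2.simpl1} by $\varphi_K$, summing over $K$, and using discrete integration by parts \eqref{2.ibp} turns the right-hand side into $\sum_\sigma \F_{i,K,\sigma}^k(\varphi_K-\varphi_L)$. Writing the flux via \eqref{2.Fsqrt} and applying Cauchy--Schwarz in the form $\sum_\sigma\tau_\sigma|\text{flux factor}|\,|\varphi_K-\varphi_L|\le(\sum_\sigma\tau_\sigma|\text{flux factor}|^2)^{1/2}\|\varphi\|_{1,\T}$ gives $\|\pa_t^\dt u_i^k\|_{-1,\T}^2\le C\sum_\sigma\tau_\sigma(\text{flux factor})^2$, and summing over $k$ with weight $\dt$ reduces everything to the already-controlled quantities from \eqref{4.est1}. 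The case $i=0$ follows identically from the summed equation for $u_0$ displayed in Step 1 of Section \ref{sec.uni}, whose flux is $(u_{0,K}-u_{0,L})(1+|u_{0,K}-u_{0,L}|)$, bounded in the same way. I expect the main obstacle to be the flux-reformulation bookkeeping for $\|u_0^{1/2}u_i\|_{1,\T,\dt}$: correctly splitting the discrete product $u_0^{1/2}u_i$ across an edge so that each resulting square is dominated by one of the three entropy-production terms, while keeping the $u_{0,\sigma}$-versus-$u_{0,K}$ weights under control, is where the argument is genuinely delicate rather than routine.
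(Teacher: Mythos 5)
Your proposal is correct and follows essentially the same route as the paper: summing the telescoped entropy inequality to control the three entropy-production terms, the discrete product-rule splitting of $u_{0,K}^{1/2}u_{i,K}-u_{0,L}^{1/2}u_{i,L}$ (with $u_{0,L}^{1/2}\le(u_{0,\sigma}^k)^{1/2}$ since $u_{0,\sigma}^k$ is the max) for the $\|u_0^{1/2}u_i\|_{1,\T,\dt}$ bound, and the duality argument with the reformulated flux \eqref{2.Fsqrt} for \eqref{4.est2}. The only (harmless) deviation is the case $i=0$ of \eqref{4.est2}, where the paper simply writes $\pa_t^\dt u_0^k=-\sum_{i=1}^n\pa_t^\dt u_i^k$ rather than re-testing the summed equation.
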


\begin{proof}
We claim that estimates \eqref{4.est1} follow from the discrete entropy inequality
\eqref{1.epi}. Indeed, we sum \eqref{1.epi} over $k=1,\ldots,N$ to obtain
\begin{align*}
  H^N &+ D\sum_{k=1}^N\dt\sum_{\sigma=K|L\in\E_{\rm int}}\tau_\sigma
	\bigg(4\sum_{i=1}^n u_{0,\sigma}^k\big((u_{i,K}^k)^{1/2}-(u_{i,L}^k)^{1/2}\big)^2 \\
  &{}+ 4\big((u_{0,K}^k)^{1/2}-(u_{0,L}^k)^{1/2}\big)^2 
	+ \big(u_{0,K}^k-u_{0,L}^k\big)^2\bigg) \le H^0.
\end{align*}
Since the entropy at time $t=0$ is bounded independently of the discretization,
we infer immediately the bound for $u_0^{1/2}$ in ${\mathcal H}_{\T,\dt}$. 
For the bound on $u_0^{1/2}u_i$ in ${\mathcal H}_{\T,\dt}$, we observe that
\begin{align*}
  &(u_{0,K}^k)^{1/2}u_{i,K}^k - (u_{0,L}^k)^{1/2}u_{i,L}^k \\
	&\phantom{x}= u_{i,K}\big((u_{0,K}^k)^{1/2}-(u_{0,L}^k)^{1/2}\big)
	+ (u_{0,L}^k)^{1/2}\big((u_{i,K}^k)^{1/2}+(u_{i,L}^k)^{1/2}\big)
	\big((u_{i,K}^k)^{1/2}-(u_{i,L}^k)^{1/2}\big).
\end{align*}
Therefore, together with the $L^\infty$ bounds on $u_i$,
\begin{align*}
  &\sum_{\sigma=K|L\in\E_{\rm int}} \tau_\sigma
	\big((u_{0,K}^k)^{1/2}u_{i,K}^k - (u_{0,L}^k)^{1/2}u_{i,L}^k\big)^2 \\
	&\phantom{x}\le \sum_{\sigma=K|L\in\E_{\rm int}}\tau_\sigma
	\big((u_{0,K}^k)^{1/2}-(u_{0,L}^k)^{1/2}\big)^2
	+ 2\sum_{\sigma=K|L\in\E_{\rm int}}\tau_\sigma u_{0,\sigma}^k
	\big((u_{i,K}^k)^{1/2}-(u_{i,L}^k)^{1/2}\big)^2.
\end{align*}
Then, summing over $k=0,\ldots,N$ and using the estimates from the
entropy inequality, we achieve the bound on $u_0^{1/2}u_i$.

It remains to prove estimate \eqref{4.est2}. To this end, let $\phi\in{\mathcal H}_\T$
be such that $\|\phi\|_{1,\T}=1$ and let $k\in\{1,\ldots,N\}$ and $i\in\{1,\ldots,n\}$. 
We multiply the scheme \eqref{2.simpl1} by $\Phi_K$ and we sum over $K\in\T$. Using 
successively discrete integration by parts, the rewriting of the numerical fluxes 
\eqref{2.Fsqrt}, the Cauchy-Schwarz inequality, and the $L^\infty$ bounds on $u_i$, 
we compute
\begin{align*}
  &\sum_{K\in\T}\frac{\m(K)}{\dt}\big(u_{i,K}^k-u_{i,K}^{k-1}\big)\phi_K \\
	&= D\sum_{\sigma=K|L\in\E_{\rm int}}\tau_\sigma(u_{0,\sigma}^k)^{1/2}
	\big((u_{0,K}^k)^{1/2}u_{i,K}^k - (u_{0,L}^k)^{1/2}u_{i,L}^k\big)(\phi_K-\phi_L) \\
	&\phantom{xx}{}- D\sum_{\sigma=K|L\in\E_{\rm int}}\tau_\sigma
	\big((u_{0,K}^k)^{1/2}-(u_{0,L}^k)^{1/2}\big) \\
	&\phantom{xxxx}{}\times u_{i,\sigma}^k
	\bigg((u_{0,\sigma}^k)^{1/2}
	+ 2\frac{(u_{0,K}^k)^{1/2}+(u_{0,L}^k)^{1/2}}{2}\bigg)
	(\phi_K-\phi_L) \\
	&\le D\bigg(\sum_{\sigma=K|L\in\E_{\rm int}}\tau_\sigma
	\big((u_{0,K}^k)^{1/2}u_{i,K}^k - (u_{0,L}^k)^{1/2}u_{i,L}^k\big)^2\bigg)^{1/2}
	\bigg(\sum_{\sigma=K|L\in\E_{\rm int}}\tau_\sigma(\phi_K-\phi_L)^2\bigg)^{1/2} \\
	&\phantom{xx}{}+ 3D\bigg(\sum_{\sigma=K|L\in\E_{\rm int}}\tau_\sigma
	\big((u_{0,K}^k)^{1/2}-(u_{0,L}^k)^{1/2}\big)^2\bigg)^{1/2}
	\bigg(\sum_{\sigma=K|L\in\E_{\rm int}}\tau_\sigma(\phi_K-\phi_L)^2\bigg)^{1/2}.
\end{align*}
This shows that, for $i=1,\ldots,n$,
$$
  \sum_{k=1}^N\dt\bigg\|\frac{u_i^k-u_i^{k-1}}{\dt}\bigg\|_{-1,\T}^2
	\le 2D^2\sum_{k=1}^N\dt\Big(\big\|(u_{0}^k)^{1/2}u_i^k\big\|_{1,\T}^2
	+ 9\big\|(u_0^k)^{1/2}\big\|_{1,\T}^2\Big) \le C,
$$
as a consequence of \eqref{4.est1}. The estimate for 
$\dt^{-1}(u_0^k-u_0^{k-1}) = -\dt^{-1}\sum_{i=1}^n(u_i^k-u_i^{k-1})$
follows from those for $i=1,\ldots,n$, completing the proof.
\end{proof}


\section{Convergence of the scheme}\label{sec.conv}

In this section, we establish the convergence of the sequence of approximate
solutions, constructed in Theorem \ref{thm.ex}, to a weak solution to
\eqref{1.eq}, i.e., we prove Theorem \ref{thm.conv}.

\subsection{Compactness of the approximate solutions}

In order to achieve the convergence in the fluxes, we proceed as in \cite{CLP03}
by defining the approximate gradient on a dual mesh. For $\sigma=K|L\in\E_{\rm int}$,
we define the new cell $T_{KL}$ as the cell with the vertexes $x_K$, $x_L$
and those of $\sigma$. For $\sigma\in\E_{\rm ext}\cap\E_K$, we define $T_{K\sigma}$
as the cell with vertex $x_K$ and those of $\sigma$. Then $\Omega$ can be
decomposed as
$$
  \overline\Omega = \bigcup_{K\in\T}\bigg\{\bigg(\bigcup_{L\in \mathcal{N}_K}
	\overline{T}_{KL}\bigg)\cup
	\bigg(\bigcup_{\sigma\in\E_{{\rm ext},K}}\overline{T}_{K\sigma}\bigg)\bigg\},
$$
where $\mathcal{N}_K$ denotes the set of neighboring cells of $K$. 
The discrete gradient $\na_{\T,\dt}v$ on $\Omega_T:=\Omega\times(0,T)$ for
piecewise constant functions $v\in\mathcal{H}_{\T,\dt}$ is defined by
\begin{equation}\label{5.grad}
  \na_{\T,\dt}v(x,t) = \begin{cases}
  \displaystyle\frac{\m(\sigma)(v_L^k-v_K^k)}{\m(T_{KL})}\textbf{n}_{KL}
	\quad &\text{for }x\in T_{KL},\ t\in (t^k,t^{k+1}),\\
  0\quad &\text{for }x\in T_{K\sigma},\ t\in (t^k,t^{k+1}),
\end{cases}
\end{equation}
where $\mathbf{n}_{KL}$ denotes the unit normal on $\sigma=K|L$ oriented from $K$
to $L$. To simplify the notation, we set $\na_m:=\na_{\T_m,\dt_m}$. The solution
to the approximate scheme \eqref{2.simpl1}-\eqref{2.simpl2} is called
$u_{0,m},u_{1,m},\ldots,u_{n,m}$.

\begin{lemma}\label{lem.conv}
There exist functions $u_0\in L^\infty(\Omega_T)\cap L^2(0,T;H^1(\Omega))$
and $u_1,\ldots,u_n\in L^\infty(\Omega_T)$ such that, possibly for subsequences,
as $m\to\infty$,
\begin{align}
  u_{0,m}\to u_0, \quad u_{0,m}^{1/2}\to u_0^{1/2} 
	&\quad\mbox{strongly in }L^2(\Omega_T), \label{4.conv1} \\
	\na_m u_{0,m}\rightharpoonup \na u_0, \quad
	\na_m u_{0,m}^{1/2}\rightharpoonup \na u_0^{1/2} &\quad\mbox{weakly in }L^2(\Omega_T), 
	\label{4.conv2} \\
	u_{0,m}^{1/2}u_{i,m}\to u_0^{1/2}u_i &\quad\mbox{strongly in }L^2(\Omega_T), 
	\label{4.conv3} \\
	\na_m\big(u_{0,m}^{1/2}u_{i,m}\big)\rightharpoonup \na(u_0^{1/2}u_i)
	&\quad\mbox{weakly in }L^2(\Omega_T), \label{4.conv4}
\end{align}
where $i\in\{1,\ldots,n\}$.
\end{lemma}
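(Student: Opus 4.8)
The plan is to establish the four convergence statements by combining the uniform a priori estimates from Lemma \ref{lem.est} with two compactness results: a standard discrete Aubin--Lions lemma for the solvent concentration and the degenerate variant (Lemma \ref{lem.aubin2}) for the products $u_{0,m}^{1/2}u_{i,m}$. First I would deal with \eqref{4.conv1}--\eqref{4.conv2}. The estimate \eqref{4.est1} gives a uniform bound on $\|u_{0,m}^{1/2}\|_{1,\T,\dt}$, and the $L^\infty$ bound $0\le u_{0,m}\le 1$ (from $u^k\in\overline\D$) controls $u_{0,m}$ itself. Together with the time-derivative estimate \eqref{4.est2} for $i=0$, the discrete Aubin--Lions lemma of \cite{EGH08} yields relative compactness of $(u_{0,m}^{1/2})$ in $L^2(\Omega_T)$, hence a strongly converging subsequence with some limit that we name $u_0^{1/2}$; squaring and using the $L^\infty$ bound upgrades this to strong convergence of $u_{0,m}$ to $u_0$ in $L^2(\Omega_T)$ as well. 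The uniform $H^1$-type bound then gives, up to a further subsequence, weak $L^2$ convergence of the discrete gradients $\na_m u_{0,m}^{1/2}$ and $\na_m u_{0,m}$; the identification of the weak limits with $\na u_0^{1/2}$ and $\na u_0$ is the standard argument that the discrete gradient of a strongly convergent piecewise-constant sequence converges weakly to the gradient of the limit (see \cite{EGH00,CLP03}), using the mesh regularity \eqref{2.dd} so that the dual-mesh reconstruction \eqref{5.grad} is consistent.

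Next I would treat \eqref{4.conv3}, which is the heart of the lemma. The obstruction here is that there is no uniform control on the discrete gradient of $u_{i,m}$ alone --- only on the product $u_{0,m}^{1/2}u_{i,m}$ via \eqref{4.est1}, reflecting the degeneracy of the system. The idea is to invoke the degenerate discrete Aubin--Lions lemma, Lemma \ref{lem.aubin2}. To apply it I must supply: (i) the uniform spatial estimate $\|u_{0,m}^{1/2}u_{i,m}\|_{1,\T,\dt}\le C$ from \eqref{4.est1}; (ii) a uniform time-translation or discrete time-derivative estimate, which should come from \eqref{4.est2} for $u_{i,m}$ combined with the already-established strong convergence of $u_{0,m}^{1/2}$, so that the product inherits enough equicontinuity in time; and (iii) the structural compatibility required by the degenerate lemma, namely that the ``weight'' $u_{0,m}^{1/2}$ converges strongly. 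The lemma then delivers strong $L^2(\Omega_T)$ convergence of $(u_{0,m}^{1/2}u_{i,m})$ to some limit, which by uniqueness of $L^2$ limits and the pointwise relation $u_{0,m}^{1/2}\cdot(u_{0,m}^{1/2}u_{i,m})=u_{0,m}u_{i,m}$ is identified with $u_0^{1/2}u_i$.

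Finally, for \eqref{4.conv4} I would argue exactly as for \eqref{4.conv2}: the bound \eqref{4.est1} on $\|u_{0,m}^{1/2}u_{i,m}\|_{1,\T,\dt}$ gives, along a subsequence, weak $L^2$ convergence of $\na_m(u_{0,m}^{1/2}u_{i,m})$, and the now-established strong convergence \eqref{4.conv3} identifies the weak limit as $\na(u_0^{1/2}u_i)$ through the same discrete-gradient consistency argument. The $L^\infty$ membership of $u_1,\ldots,u_n$ and the inclusion $u_0\in L^\infty(\Omega_T)\cap L^2(0,T;H^1(\Omega))$ follow from the uniform bounds and lower semicontinuity of the norm under weak convergence. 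A diagonal extraction over the finitely many indices $i=0,\ldots,n$ produces a single subsequence along which all four statements hold simultaneously.

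I expect step \eqref{4.conv3} to be the main obstacle, since it is precisely where the nonstandard degeneracy prevents the use of the classical Aubin--Lions argument; the whole point of the ``degenerate''-type lemma proved in the appendix is to supply compactness for the product $u_{0,m}^{1/2}u_{i,m}$ without any gradient bound on $u_{i,m}$ itself. The delicate point in applying it is verifying the time-regularity hypothesis (ii) for the product, which requires carefully transferring the $H^{-1}$ estimate \eqref{4.est2} on $u_{i,m}$ through the multiplication by the strongly convergent factor $u_{0,m}^{1/2}$.
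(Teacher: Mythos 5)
Your overall strategy coincides with the paper's: a standard discrete Aubin--Lions argument for the solvent, the degenerate variant (Lemma \ref{lem.aubin2}) with $y_m=u_{0,m}^{1/2}$ and $z_m=u_{i,m}$ for the products, and weak compactness combined with the strong convergences to identify the limits of the discrete gradients. There is, however, one step that does not go through as written: you apply the discrete Aubin--Lions lemma to the sequence $(u_{0,m}^{1/2})$, feeding it the spatial bound $\|u_{0,m}^{1/2}\|_{1,\T,\dt}\le C$ from \eqref{4.est1} together with the time-derivative bound \eqref{4.est2} for $i=0$. But \eqref{4.est2} controls $\|\pa_t^{\dt}u_{0,m}^k\|_{-1,\T}$, not $\|\pa_t^{\dt}(u_{0,m}^k)^{1/2}\|_{-1,\T}$, and Lemma \ref{lem.aubin} requires the discrete $H^1$ bound and the dual time-derivative bound for the \emph{same} sequence. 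Mixing a gradient bound on $u_{0,m}^{1/2}$ with a time bound on $u_{0,m}$ would require a ``nonlinear'' Aubin--Lions lemma, which is not what is proved in the appendix.

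The paper circumvents this by running the argument in the opposite order: it first transfers the gradient bound from $u_{0,m}^{1/2}$ to $u_{0,m}$ via the factorization $u_{0,K}^k-u_{0,L}^k=\big((u_{0,K}^k)^{1/2}+(u_{0,L}^k)^{1/2}\big)\big((u_{0,K}^k)^{1/2}-(u_{0,L}^k)^{1/2}\big)$ together with the $L^\infty$ bound, so that $\|u_{0,m}\|_{1,\T,\dt}\le C$ (computation \eqref{4.est3}); then both hypotheses of Lemma \ref{lem.aubin} hold for $u_{0,m}$ itself, giving $u_{0,m}\to u_0$ strongly in $L^2(\Omega_T)$; and the strong convergence of $u_{0,m}^{1/2}$ then follows from a.e.\ convergence of a subsequence plus the uniform $L^\infty$ bound, not from a second application of Aubin--Lions. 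With this reordering your argument closes. One further small correction: the time-regularity hypothesis of Lemma \ref{lem.aubin2} is exactly the bound \eqref{4.est2} on $\pa_t^{\dt}u_{i,m}$ itself; nothing needs to be transferred to the product $u_{0,m}^{1/2}u_{i,m}$, so the step you flagged as delicate is in fact immediate once the lemma's hypotheses are read off. The remaining steps match the paper's proof.
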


\begin{proof}
First, we claim that $(u_{0,m})$ is uniformly bounded in $\mathcal{H}_{\T,\dt}$.
Indeed, by the $L^\infty$ bounds and estimate \eqref{4.est1},
\begin{align}
  \|u_{0,m}\|_{1,\T,\dt}^2
	&= \sum_{k=1}^N\dt\bigg(\sum_{\sigma=K|L\in\E_{\rm int}}\tau_\sigma(u^k_{0,K}-u^k_{0,L})^2
	+ \sum_{K\in\T}\m(K)(u_{0,K}^k)^2\bigg) \nonumber \\
	&= \sum_{k=1}^N\dt\bigg(\sum_{\sigma=K|L\in\E_{\rm int}}\tau_\sigma
	\big((u_{0,K}^k)^{1/2}+(u_{0,L}^k)^{1/2}\big)^2
	\big((u^k_{0,K})^{1/2}-(u^k_{0,L})^{1/2}\big)^2 \nonumber \\
	&\phantom{xx}{}+ \sum_{K\in\T}\m(K)(u_{0,K}^k)^2\bigg) \label{4.est3}\\
	&\le 4\|u_{0,m}\|_{L^\infty(\Omega_T)}\|u_{0,m}^{1/2}\|_{1,\T,\dt}^2  + \|u_{0,m}\|^2_{L^2(\Omega_T)}\le C.
	\nonumber 
\end{align}
By estimate \eqref{4.est2}, $(\pa_t^\dt u_{0,m})$ is uniformly bounded. Therefore,
by the discrete Aubin-Lions lemma
(see Lemma \ref{lem.aubin} in the appendix), we conclude the existence of a 
subsequence (not relabeled) such that the first convergence in \eqref{4.conv1} holds.
The strong convergence implies (up to a subsequence) that $u_{0,m}\to u_0$
pointwise in $\Omega_T$ and consequently $u_{0,m}^{1/2}\to u_0^{1/2}$ pointwise
in $\Omega_T$. Thus, together with the $L^\infty$ bound for $u_{0,m}^{1/2}$,
we infer the second convergence in \eqref{4.conv1}.

The convergences in \eqref{4.conv2} are a consequence
of the uniform estimates \eqref{4.est1} and \eqref{4.est3}
and the compactness result in \cite[proof of Theorem 10.3]{EGH00}. 
Applying the discrete Aubin-Lions lemma of ``degenerate'' type
(see Lemma \ref{lem.aubin2} in the appendix) to $y_m=u_{0,m}^{1/2}$ and
$z_m=u_{i,m}$ for fixed $i\in\{1,\ldots,n\}$, we deduce convergence \eqref{4.conv3}.
Finally, convergence \eqref{4.conv4} is a consequence of \eqref{4.conv3} and
the weak compactness of $(u_{0,m}^{1/2}u_{i,m})$, thanks to the uniform bound in
\eqref{4.est1}.
\end{proof}


\subsection{The limit $m\to\infty$}

We finish the proof of Theorem \ref{thm.conv} by verifying that the limit function
$u=(u_1,\ldots,u_n)$, as defined in Lemma \ref{lem.conv}, is a weak solution
in the sense of the theorem.

Let $\phi\in C_0^\infty(\overline{\Omega}\times[0,T))$ and let $m\in\N$ be large
enough such that $\operatorname{supp}\phi\subset\overline{\Omega}\times[0,(N_m-1)\dt_m)$
(recall that $T=N_m\dt_m$).
For the limit, we follow the strategy used, for instance, in \cite{CLP03} and
introduce the following notations:
\begin{align*}
	F_{10}(m)&=-\int_0^T\int_{\Omega}
	u_{i,m}\pa_t\phi dxdt-\int_{\Omega}u_{i,m}(0)\phi(0)dx, \\
	F_{20}(m)&=\int_0^T\int_{\Omega}
	u_{0,m}^{1/2}\na_m(u_{0,m}^{1/2}u_{i,m})\na\phi dxdt, \\
	F_{30}(m)&=3\int_0^T\int_{\Omega}
	u_{0,m}^{1/2}u_{i,m}\na_m(u_{0,m}^{1/2})\na\phi dxdt.
\end{align*}
The convergence results of Lemma \ref{lem.conv} show that, as $m\to\infty$,
\begin{align}
  F_{10}(m)+DF_{20}(m)-DF_{30}(m) &\to -\int_0^T\int_\Omega u_i\pa_t\phi dxdt
	- \int_\Omega u_i^0\phi(0)dx \label{4.sumF0} \\
	&\phantom{xx}{}
	+ D\int_0^T\int_\Omega\big(u_0^{1/2}\na(u_0^{1/2}u_i) - 3u_0^{1/2}u_i\na u_0^{1/2}
	\big)dxdt. \nonumber
\end{align}

Next, setting $\phi_K^k=\phi(x_K,t^k)$, we multiply scheme \eqref{2.simpl1}
by $\dt_m\phi_K^{k-1}$ and sum over $K\in\T_m$ and $k=1,\ldots,N_m$. Then
\begin{equation}\label{4.F123}
  F_1(m) + DF_2(m) - DF_3(m) = 0,
\end{equation}
where, omitting the subscript $m$ from now on to simplify the notation,
\begin{align*}
	F_1(m) &= \sum_{k=1}^{N}\sum_{K\in\T}\m(K)\big(u_{i,K}^k-u_{i,K}^{k-1}\big)
	\phi_K^{k-1}, \\
	F_2(m) &= \sum_{k=1}^{N}\dt\sum_{K\in\T}\sum_{\sigma\in\E_{K,\rm int}}\tau_\sigma 
	(u_{0,\sigma}^k)^{1/2}\big((u_{0,K}^k)^{1/2}u_{i,K}^k - (u_{0,L}^k)^{1/2}u_{i,L}^k
	\big)\phi_K^{k-1}, \\
	F_3(m) &= \sum_{k=1}^{N}\dt\sum_{K\in\T}\sum_{\sigma\in\E_{K,\rm int}}\tau_\sigma
	\big((u_{0,K}^k)^{1/2} - (u_{0,L}^k)^{1/2}\big) \\
	&\phantom{xx}{}\times u_{i,\sigma}^k\bigg((u_{0,\sigma}^k)^{1/2} 
	+ 2\frac{(u_{0,K}^k)^{1/2}+(u_{0,L}^k)^{1/2}}{2}\bigg)\phi_K^{k-1}.
\end{align*}

The aim is to show that $F_{i0}(m)-F_{i}(m)\to 0$ as $m\to\infty$ for $i=1,2,3$.
Then, because of \eqref{4.F123}, $F_{10}(m)+DF_{20}(m)-DF_{30}(m)\to 0$, which
finishes the proof. We start by verifying that $F_{10}(m)-F_1(m)\to 0$.
For this, we rewrite $F_1(m)$ and $F_{10}(m)$, using $\phi_K^N=0$:
\begin{align*}
	F_1(m) &= \sum_{k=1}^{N}\sum_{K\in\T}\m(K)u_{i,K}^k\big(\phi_K^{k-1}-\phi_K^k\big)
	-\sum_{K\in\T}\m(K)\phi_K^0u_{i,K}^0, \\
	&= -\sum_{k=1}^{N}\sum_{K\in\T}\int_{t^{k-1}}^{t^k}\int_K u_{i,K}^k\pa_t\phi(x_K,t)
	dxdt - \sum_{K\in\T}\int_Ku_{i,K}^0\phi(x_K,0)dx, \\
	F_{10}(m) &= -\sum_{k=1}^{N}\sum_{K\in\T}\int_{t^{k-1}}^{t^k}\int_K u_{i,K}^k
	\pa_t\phi(x,t)dxdt - \sum_{K\in\T}\int_Ku_{i,K}^0\phi(x,0)dx.
\end{align*}
In view of the regularity of $\phi$ and the uniform $L^\infty$ bound on $u_i$,
we find that 
$$
  |F_{10}(m)-F_{1}(m)| \le CT\m(\Omega)\|\phi\|_{C^2}
	h(\T_m) \to 0 \quad\mbox{as }m\to\infty.
$$

Using discrete integration by parts, the second integral becomes
\begin{align*}
  F_2(m) &= \sum_{k=1}^{N}\dt\sum_{\sigma=K|L\in\E_{\rm int}}\tau_\sigma (u_{0,\sigma}^k)^{1/2}
	\big((u_{0,K}^k)^{1/2}u_{i,K}^k - (u_{0,L}^k)^{1/2}u_{i,L}^k\big)
	\big(\phi_K^{k-1}-\phi_L^{k-1}\big) \\
	&= F_{21}(m)+F_{22}(m),
\end{align*}
where we have decomposed $(u_{0,\sigma}^k)^{1/2} = (u_{0,K}^k)^{1/2}
+ ((u_{0,\sigma}^k)^{1/2}-(u_{0,K}^k)^{1/2})$, i.e.
\begin{align*}
	F_{21}(m) &= \sum_{k=1}^{N}\dt\sum_{\sigma=K|L\in\E_{\rm int}}\tau_\sigma (u_{0,K}^k)^{1/2}
	\big((u_{0,K}^k)^{1/2}u_{i,K}^k - (u_{0,L}^k)^{1/2}u_{i,L}^k\big)
	\big(\phi_K^{k-1}-\phi_L^{k-1}\big), \\
	F_{22}(m) &= \sum_{k=1}^{N}\dt\sum_{\sigma=K|L\in\E_{\rm int}}\tau_\sigma 
	\big((u_{0,\sigma}^k)^{1/2} - (u_{0,K}^k)^{1/2}\big)
	\big((u_{0,K}^k)^{1/2}u_{i,K}^k - (u_{0,L}^k)^{1/2}u_{i,L}^k\big) \\
	&\phantom{xx}{}\times\big(\phi_K^{k-1}-\phi_L^{k-1}\big).
\end{align*}
Furthermore, we write $F_{20}(m)=G_1(m)+G_2(m)$, where
\begin{align*}
	G_1(m) &= \sum_{k=1}^{N}\sum_{\sigma=K|L\in\E_{\rm int}}\frac{\m(\sigma)}{\m(T_{KL})}
	(u_{0,K}^k)^{1/2}\big((u_{0,K}^k)^{1/2}u_{i,K}^k - (u_{0,L}^k)^{1/2}u_{i,L}^k\big) \\
	&\phantom{xx}{}\times
	\int_{t^{k-1}}^{t^k}\int_{T_{KL}}\na\phi(x,t)\cdot\textbf{n}_{K\sigma}dxdt, \\
	G_2(m) &= \sum_{k=1}^{N}\sum_{\sigma=K|L\in\E_{\rm int}}\frac{\m(\sigma)}{\m(T_{KL})}
	\big((u_{0,L}^k)^{1/2} - (u_{0,K}^k)^{1/2}\big)
	\big((u_{0,K}^k)^{1/2}u_{i,K}^k - (u_{0,L}^k)^{1/2}u_{i,L}^k\big) \\ 
	&\phantom{xx}{}\times\int_{t^{k-1}}^{t^k}\int_{T_{KL}\cap L}
	\na\phi(x,t)\cdot\textbf{n}_{K\sigma}dxdt.
\end{align*}

The aim is to show that $F_{21}(m)-G_1(m)\to 0$, $F_{22}(m)\to 0$, and $G_2(m)\to 0$.
This implies that
\begin{align*}
  |F_{20}(m)-F_2(m)| 
	&= \big|(G_1(m)+G_2(m))-(F_{21}(m)+F_{22}(m))\big| \\
	&\le |G_1-F_{21}| + |G_2| + |F_{22}| \to 0.
\end{align*}
First we notice that, due to the admissibility of the mesh and the regularity of $\phi$,
by taking the mean value over $T_{KL}$,
\begin{equation}\label{4.res1}
  \bigg|\int_{t^{k-1}}^{t^k}\bigg(\frac{\phi_K^{k-1}-\phi_L^{k-1}}{\dist_\sigma}
	- \frac{1}{\m(T_{KL})}\int_{T_{KL}}\na\phi(x,t)\cdot\textbf{n}_{K\sigma}\bigg)
	dt| \le C\dt h(\T),
\end{equation}
where the constant $C>0$ only depends on $\phi$. It yields 
\begin{align*}
|F_{21}(m)-G_1(m)| &\le Ch(\T)\sum_{k=1}^N\dt\sum_{\sigma=K|L\in\E_{\rm int}} \m(\sigma) \big|(u_{0,K}^k)^{1/2}u_{i,K} - (u_{0,L}^k)^{1/2}u_{i,L}\big|\\
%
%
	&\le Ch(\T)\|u_0^{1/2}u_i\|_{1,\T,\dt}(T\m(\Omega))^{1/2},
\end{align*}
where the last estimate follows from the Cauchy-Schwarz inequality.
This proves that $|F_{21}(m)-G_1(m)|\to 0$ as $m\to\infty$.

It remains to analyze the expressions $F_{22}(m)$ and $G_2(m)$. To this end,
we remark that $\dist_\sigma\le h(\T)$ and hence, together with
the regularity of $\phi$, and the Cauchy-Schwarz inequality,
\begin{align*}
  |F_{22}(m)| &\le \sum_{k=1}^N\dt\sum_{\sigma=K|L\in\E_{\rm int}}\tau_\sigma
	\big|(u_{0,\sigma}^k)^{1/2} - (u_{0,K}^k)^{1/2}\big|\,
	\big|(u_{0,K}^k)^{1/2}u_{i,K}^k - (u_{0,L}^k)^{1/2}u_{i,L}^k\big| \\
	&\phantom{xx}{}\times
	\frac{|\phi_K^{k-1}-\phi_L^{k-1}|}{\dist_\sigma}\dist_\sigma \\
	&\le Ch(\T)\|\phi\|_{C^1}\sum_{k=1}^N\dt\sum_{\sigma=K|L\in\E_{\rm int}}\tau_\sigma
	\big|(u_{0,\sigma}^k)^{1/2} - (u_{0,K}^k)^{1/2}\big| \\
	&\phantom{xx}{}\times
	\big|(u_{0,K}^k)^{1/2}u_{i,K}^k - (u_{0,L}^k)^{1/2}u_{i,L}^k\big| \\
  &\le Ch(\T)\|\phi\|_{C^1}\|u_0^{1/2}\|_{1,\T,\dt}\|u_0^{1/2}u_i\|_{1,\T,\dt}
	\le Ch(\T),
\end{align*}
The term $G_2(m)$ can be estimated in a similar way.

Finally, we need to show that $|F_{30}(m)-F_3(m)|\to 0$. The proof is completely
analogous to the previous arguments, since
\begin{align*}
	\bigg|3(u_{0,K}^k)^{1/2}&u_{i,K}^k - u_{i,\sigma}^k\bigg((u_{0,\sigma}^k)^{1/2}
	+ 2\frac{(u_{0,K}^k)^{1/2}+(u_{0,L}^k)^{1/2}}{2}\bigg) \bigg| \\
	&\le C\big((u_{0,\sigma}^k)^{1/2}|u_{i,K}^k-u_{i,L}^k| 
	+ |(u_{0,K}^k)^{1/2}-(u_{0,L}^k)^{1/2}|\big).
\end{align*}
Summarizing, we have proved that $|F_{i0}(m)-F_i(m)|\to 0$ for $i=1,2,3$,
and since $F_1(m)+DF_2(m)-DF_3(m)=0$, the convergence \eqref{4.sumF0}
shows that $u$ solves \eqref{1.weak}. This completes the proof of Theorem \ref{thm.conv}.


\section{Numerical experiments}\label{sec.num}

We present numerical simulations of a calcium-selective ion channel in two space
dimensions to illustrate the dynamical behavior of the ion transport model. 
Numerical simulations in one space dimension can be found in \cite{BSW12} 
for stationary solutions and in \cite{GeJu17} for transient solutions.
The channel is modeled as in \cite{GNE02}.
The selectivity of the channel is obtained by placing some 
confined oxygen ions (O$^{1/2-}$) inside the channel region. 
These ions contribute to the permanent charge density $f=-u_{\rm ox}/2$ in the 
Poisson equation, but also to the total sum of the concentrations. We consider
three further types of ions: calcium (Ca$^{2+}$, $u_1$), sodium (Na$^+$, $u_2$),
and chloride (Cl$^-$, $u_3$). While the concentrations of these ion species
satisfy the evolution equations \eqref{1.eq}, the oxygen concentration is constant
in time and given by the piecewise linear function
$$
  u_{\rm ox}(x,y) = u_{{\rm ox},\max}\times\begin{cases}
  1 \quad &\text{for }0.45 \le x \le 0.55, \\
  10(x-0.35) \quad &\text{for }0.35 \le x \le 0.45,\\
  10(0.65-x) \quad &\text{for }0.55 \le x \le 0.65, \\
  0 \quad &\text{else},
  \end{cases}
$$
where the scaled maximal oxygen concentration equals 
$u_{{\rm ox},\max}=(N_A/u_{\rm typ})\cdot 52\,$mol/L, where
$N_A\approx 6.022\cdot 10^{23}\,$mol$^{-1}$ is the Avogadro constant and
$u_{\rm typ}=3.7037\cdot 10^{25} L^{-1}$ the typical concentration
(taken from \cite[Table 1]{BSW12}). This gives $u_{{\rm ox},\max}\approx 0.84$.
The solvent concentration is computed according to $u_0=1-\sum_{i=1}^3 u_i-u_{\rm ox}$.
The physical parameters used in our simulations are taken from \cite[Table 1]{BSW12},
and the channel geometry is depicted in Figure \ref{fig.geom}. 
The boundary conditions are chosen as in \cite[Section~5]{BSW12}.

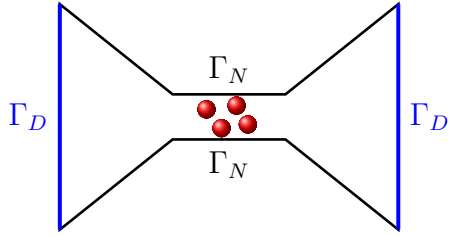
\begin{figure}
\begin{tikzpicture}
\draw[line width=1pt] (0,0) -- (0,3) node[midway, left, blue]{$\Gamma_D$} -- (1.5,1.8) -- (3,1.8) node[midway, above]{$\Gamma_N$} -- (4.5,3) -- (4.5,0) node[midway, right, blue]{$\Gamma_D$} -- (3,1.2) -- (1.5,1.2) node[midway, below]{$\Gamma_N$} -- (0,0)--cycle;
\draw[blue, line width=1.5pt] (0,0) -- (0,3);
\draw[blue, line width=1.5pt] (4.5,3) -- (4.5,0);
\shade[ball color=red] (2.15,1.35) circle (0.12);
\shade[ball color=red] (2.5,1.4) circle (0.12);
\shade[ball color=red] (2.35,1.65) circle (0.12);
\shade[ball color=red] (1.95,1.6) circle (0.12);
\end{tikzpicture}
\caption{Schematic picture of the ion channel $\Omega$ used for the simulations. 
Dirichlet boundary conditions are prescribed on $\Gamma_D$ (blue), homogeneous Neumann 
boundary conditions on $\Gamma_N$ (black). The red circles represent the confined 
$O^{1/2-}$ ions.}
\label{fig.geom}
\end{figure}

The simulations are performed with the full set of equations \eqref{1.eq}-\eqref{1.poi}
without assuming (A1)-(A3). The finite-volume scheme \eqref{2.equ}-\eqref{2.poi}
is implemented using
MATLAB, version R2015a. The nonlinear system defined by the implicit scheme is solved
with a full Newton method in the variables $u_0$, $u_1$, $u_2$, $u_3$, $\Phi$ 
for every time
step. The computations are done with a fixed time step size $\dt=10^{-3}$ until the
stationary state is approximately reached, i.e., until the discrete $L^2$ norm
between the solutions at two consecutive time steps is smaller than $10^{-12}$.
We employ an admissible mesh with 4736 elements generated by the MATLAB 
command {\tt initmesh}, which produces Delauney meshes.
As initial data, piecewise linear functions that connect the boundary values
are chosen for the ion concentrations, while the initial potential is computed
from the Poisson equation using the initial concentrations as charge density. 

Figures \ref{fig.sol1} and \ref{fig.sol2} show the concentration profiles and
the electric potential after 50 and 1400 time steps, respectively. The equilibrium
is approximately reached after 1653 time steps. The profiles depicted in Figure
\ref{fig.sol2} are already very close to the stationary state and correspond 
qualitatively well to the one-dimensional stationary profiles presented in \cite{BSW12}.
We observe that during the evolution, sodium inside the channel is replaced by the 
stronger positively charged calcium ions. For higher
initial calcium concentrations, the calcium selectivity of the channel acts 
immediately.

\begin{figure}[htb]
\includegraphics[width=\textwidth]{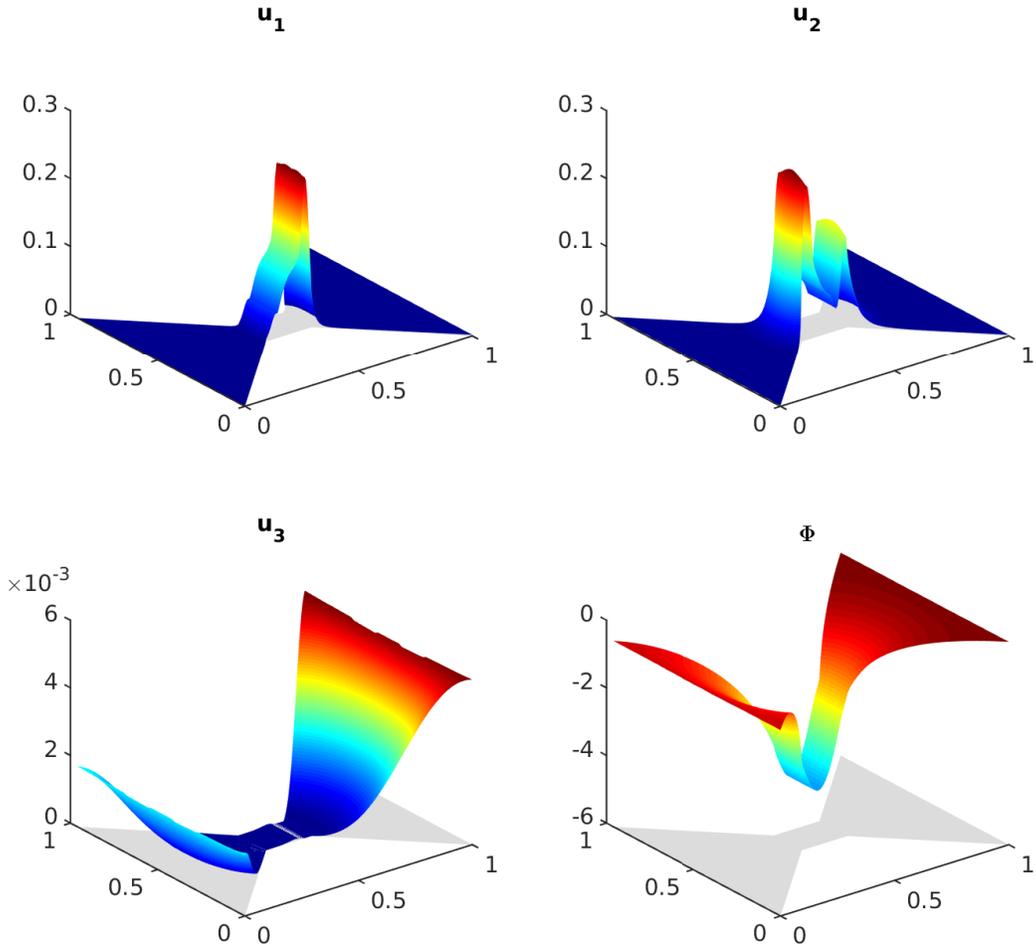}
\caption{Scaled concentrations of calcium, sodium, and chloride ions 
and electric potential after 50 time steps.}
\label{fig.sol1}
\end{figure} 

\begin{figure}[htb]
\includegraphics[width=\textwidth]{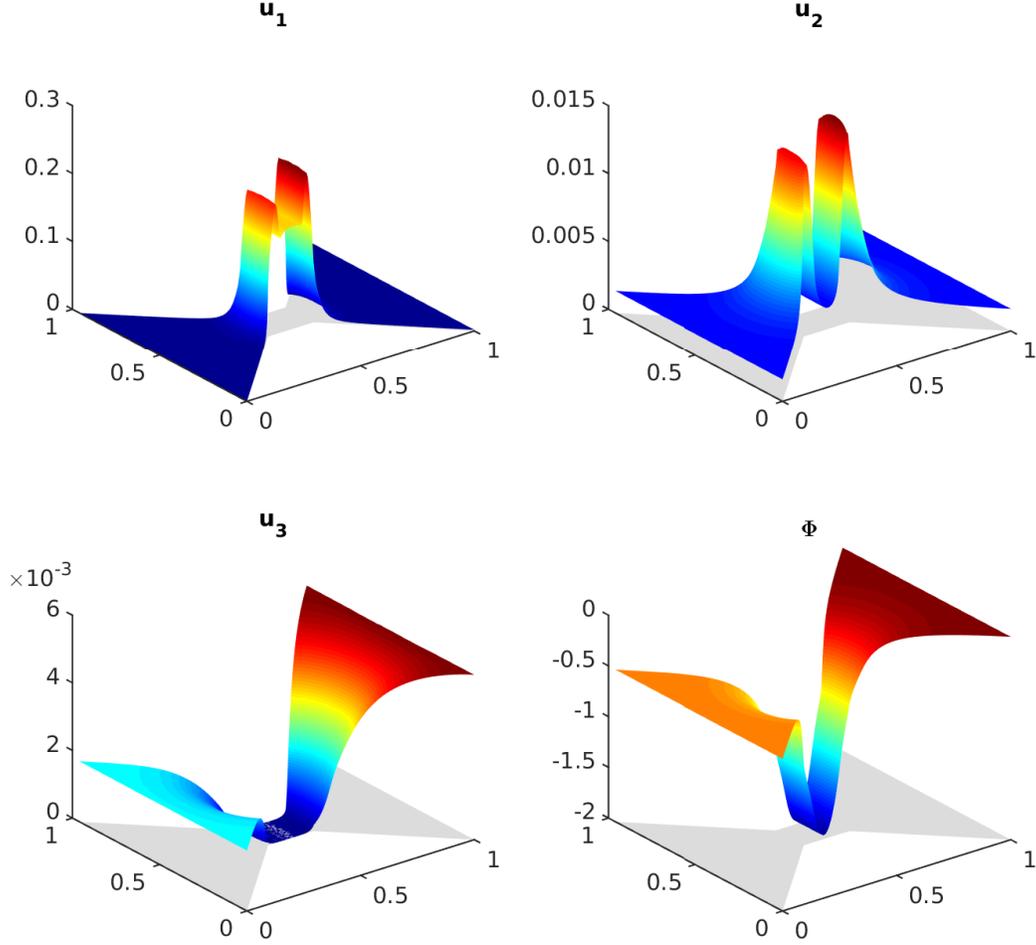}
\caption{Scaled concentrations of calcium, sodium, and chloride ions 
and electric potential after 1400 time steps (close to equilibrium).}
\label{fig.sol2}
\end{figure} 

The simulations suggest that  the solution tends towards a steady state as
$t\to\infty$. The large-time behavior can be quantified by computing the relative
entropy $E^k$ with respect to the stationary solution, where
$$
  E^k = \sum_{K\in\T}\m(K) \sum_{i=0}^n u_{i,K}^k 
	\log\bigg(\frac{u_{i,K}^k}{u_{i,K}^\infty}\bigg) 
	+ \frac{\lambda^2}{2}\sum_{\sigma\in\E}\tau_\sigma 
	\textrm{D}_{K,\sigma}(\Phi^k-\Phi^\infty)^2
$$
and $(u^\infty_{i,K},\Phi^\infty)$ is the constant steady state determined from
the boundary data. Figure \ref{fig.time} shows that the relative entropy 
as well as the discrete $L^1$ norms of the concentrations and electric potential
decay with exponential rate. Interestingly, after some initial phase, 
the convergence is rather slow and increases after this intermediate phase.
This phase can be explained by the degeneracy at $u_0=0$, which causes a small
entropy production slowing down diffusion. 
Indeed, as shown in \cite{GeJu17} for the one-dimensional setting, 
a small change in the oxygen concentration 
may prolong the intermediate phase of slow convergence drastically.

\begin{figure}[htb]
\includegraphics[width=\textwidth]{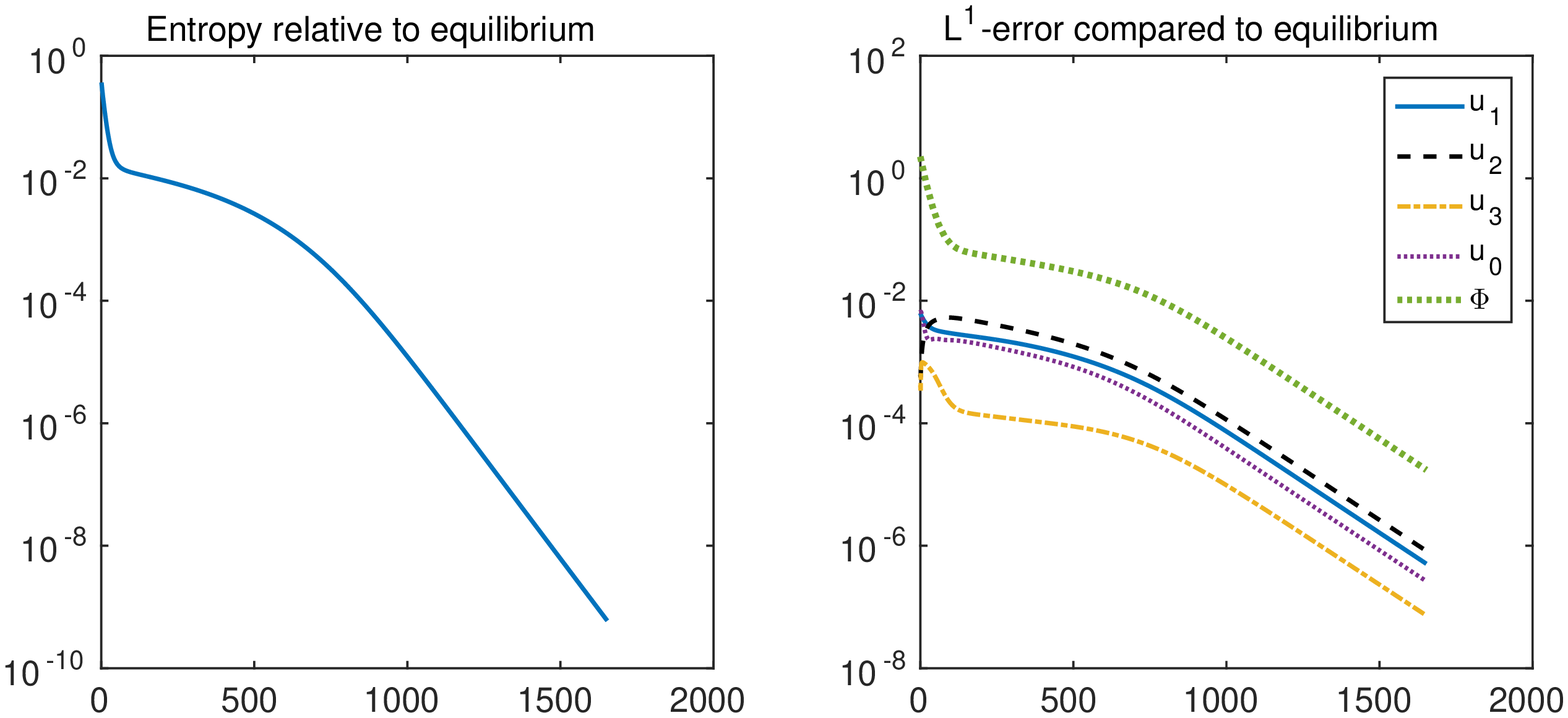}
\caption{Relative entropy (left) and discrete $L^1$ error relative to the equilibrium 
(right) over the number of time steps.}
\label{fig.time}
\end{figure} 

Since Assumptions (A1)-(A3) are not satisfied in our test case, the convergence
result of Theorem \ref{thm.conv} cannot be applied here. However, we still
observe convergence of the numerical solutions. As the exact solution is not
known explicitly, we compute a reference solution on a very fine mesh
with 75\,776 elements and mesh size $h(\T)\approx 0.01$. This mesh is obtained
from the coarse mesh by a regular refinement, dividing the triangles into
four triangles of the same shape. The reference solution is compared to 
approximate solutions on coarser nested meshes. In Figure \ref{fig.conv},
the errors in the discrete $L^1$ norm between the reference solution and the
solutions on the coarser meshes at two fixed time steps $k=50$ and $k=1400$
are plotted. We clearly observe the expected first-order convergence in space.

\begin{figure}[htb]
\includegraphics[width=\textwidth]{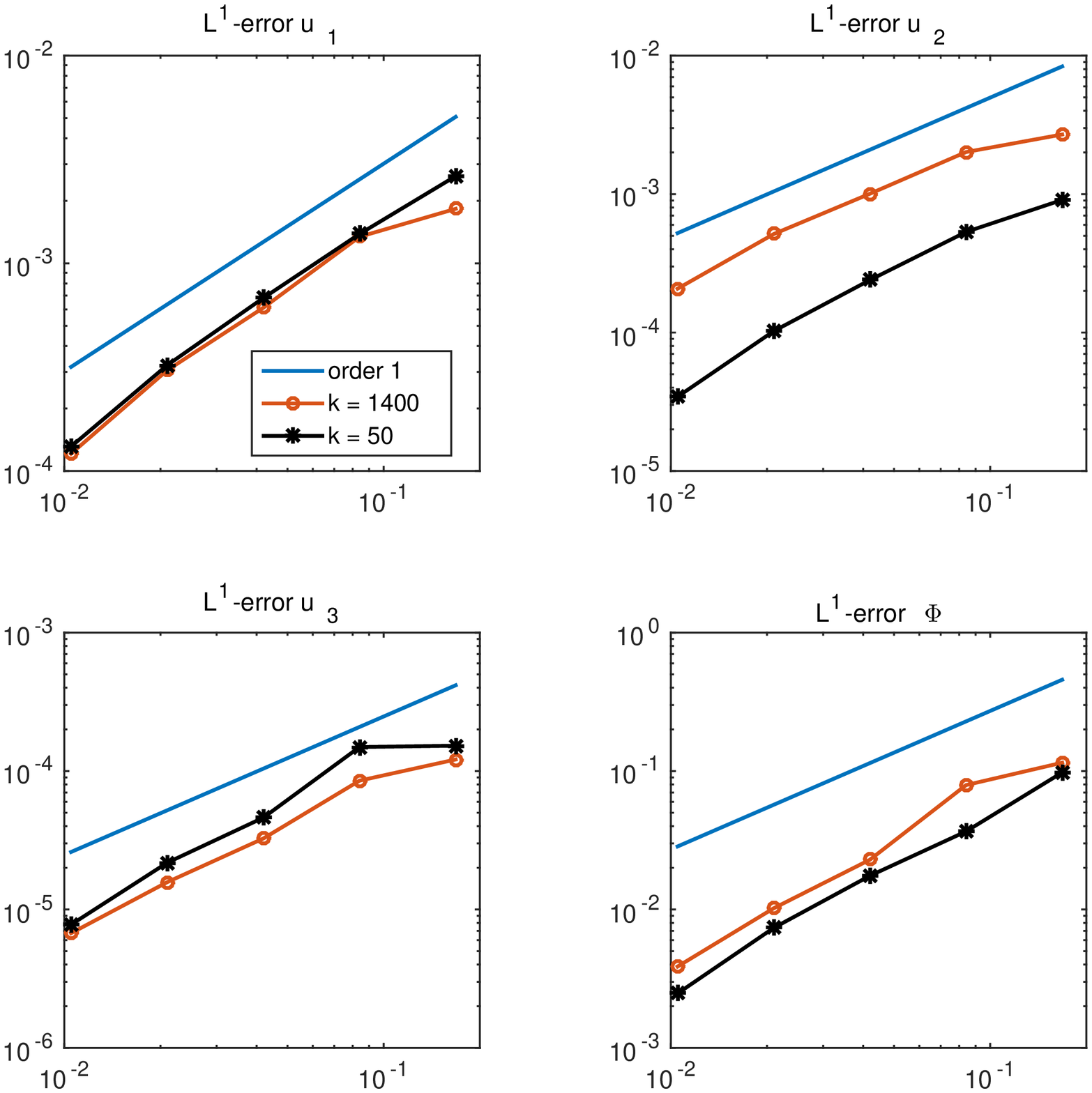}
\caption{Discrete $L^1$ error relative to the reference solution at two
different time steps over the mesh size $h(\T)$.}
\label{fig.conv}
\end{figure} 


\begin{appendix}
\section{Auxiliary results}\label{sec.app}

We prove two versions of discrete Aubin-Lions lemmas. The first one is 
a consequence of \cite[Theorem 3.4]{GaLa12}, the second one extends Lemma 13
in \cite{Jue15} to the discrete case. The latter result is new.
Recall that $\Omega_T=\Omega\times(0,T)$, $\na_m=\na_{\T_m,\dt_m}$ is
the discrete gradient defined in \eqref{5.grad}, and $\pa_t^\dt$ is the
discrete time derivative defined in \eqref{4.dtime}.

\begin{lemma}[Discrete Aubin-Lions]\label{lem.aubin}
Let $\|\cdot\|_{1,\T_m}$ be the norm on $\mathcal{H}_{\T_m}$
defined in \eqref{2.norm1} with the dual norm $\|\cdot\|_{-1,\T_m}$
given by \eqref{2.norm-1}, 
and let $(u_m)\subset\mathcal{H}_{\T_m,\dt_m}$ be a sequence
of piecewise constants in time functions with values in $\mathcal{H}_{\T_m}$
satisfying 
$$
  \sum_{k=1}^{N_m}\dt\big(\|u_m^k\|_{1,\T_m}^2 + \|\pa_t^{\dt_m} u_m^k\|_{-1,\T_m}^2\big)
	\le C,
$$
where $C>0$ is independent of the size of the mesh and the time step size.
Then there exists a subsequence, which is not relabeled, such that, as $m\to\infty$,
\begin{align*}
  u_m\to u &\quad\mbox{strongly in }L^2(\Omega_T), \\
	\na_m u_m \rightharpoonup \na u &\quad\mbox{weakly in }L^2(\Omega_T).
\end{align*}
\end{lemma}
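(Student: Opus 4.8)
The plan is to reduce this discrete Aubin--Lions lemma to the abstract compactness result of Gajewski--Gr\"oger-type stated in \cite[Theorem 3.4]{GaLa12}. That theorem provides a criterion for relative compactness in $L^2(\Omega_T)$ for sequences of piecewise-constant-in-time functions, given a uniform bound on a spatial seminorm together with a uniform bound on a negative-order norm of the discrete time derivative; precisely the two quantities controlled by our hypothesis. The first step is therefore to verify that the triple $(\mathcal{H}_{\T_m}, \|\cdot\|_{1,\T_m}, \|\cdot\|_{-1,\T_m})$ together with the $L^2$-embedding fits the functional framework required there. The key structural facts to check are that $\|\cdot\|_{-1,\T_m}$ is genuinely the dual of $\|\cdot\|_{1,\T_m}$ with respect to the $L^2$ pairing (true by definition \eqref{2.norm-1}), and that the embedding $\mathcal{H}_{\T_m}\hookrightarrow L^2(\Omega)$ is compact for each fixed $m$ while the family enjoys a uniform Poincar\'e--Wirtinger-type control. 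The mesh regularity assumption \eqref{2.dd} is exactly what is needed to make the relevant discrete Sobolev and compactness estimates uniform in $m$, as invoked in the statement itself.

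Concretely, I would proceed as follows. First I would recall that the uniform bound $\sum_k \dt\,\|u_m^k\|_{1,\T_m}^2\le C$ gives, via the discrete Poincar\'e inequality valid under \eqref{2.dd} (see \cite{BCF15,EGH00}), a uniform bound on $(u_m)$ in $L^2(\Omega_T)$ and, simultaneously, a uniform bound on the reconstructed discrete gradients $\na_m u_m$ in $L^2(\Omega_T)$. The latter bound already yields, after passing to a subsequence, a weak limit $g\in L^2(\Omega_T)$ for $\na_m u_m$. Second, I would apply \cite[Theorem 3.4]{GaLa12}: the combination of the uniform spatial seminorm bound with the uniform $\|\pa_t^{\dt_m}u_m^k\|_{-1,\T_m}$ bound furnishes relative compactness in $L^2(\Omega_T)$, so along a further subsequence $u_m\to u$ strongly, which proves the first convergence. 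Third, I would identify the weak limit of the discrete gradients: using the strong $L^2$ convergence $u_m\to u$ one shows by testing against smooth compactly supported vector fields that $g=\na u$ in the distributional sense, exploiting the consistency of the reconstruction \eqref{5.grad} on the dual mesh $T_{KL}$. This identification is precisely the argument carried out in \cite[proof of Theorem 10.3]{EGH00}, so I would cite it rather than reproduce it. Together these give the second convergence $\na_m u_m\rightharpoonup \na u$ weakly in $L^2(\Omega_T)$, and in particular $u\in L^2(0,T;H^1(\Omega))$.

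The main obstacle, and the only genuinely nontrivial point, is checking the uniformity in $m$ of the hypotheses of the abstract theorem in \cite{GaLa12}, namely that the compactness and interpolation constants do not blow up as $h(\T_m)\to 0$. This is where the mesh regularity \eqref{2.dd} is indispensable: it guarantees uniform discrete Poincar\'e and Sobolev constants and controls the geometry of the dual cells $T_{KL}$, so that the reconstruction operator and its gradient behave uniformly. Once this uniformity is in place, the rest is a direct invocation of the cited compactness theorem together with the standard gradient-identification argument, so I would keep the write-up short and lean heavily on \cite{GaLa12,EGH00}.
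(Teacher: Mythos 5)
Your proposal is correct and follows essentially the same route as the paper: strong $L^2(\Omega_T)$ compactness via \cite[Theorem 3.4]{GaLa12} after checking its hypotheses (uniform compactness of $\|\cdot\|_{1,\T_m}$-bounded sequences under the mesh regularity \eqref{2.dd}, plus the duality of $\|\cdot\|_{1,\T_m}$ and $\|\cdot\|_{-1,\T_m}$ with respect to the $L^2$ pairing, which replaces the second condition of \cite[Lemma 3.1]{GaLa12} by \cite[Remark 6]{GaLa12}), followed by identification of the weak gradient limit by testing against smooth vector fields as in \cite[Lemma 4.4]{CLP03}. The only cosmetic difference is that you invoke a discrete Poincar\'e inequality to get the $L^2$ bound, which is unnecessary since the norm \eqref{2.norm1} already contains the $L^2$ part.
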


\begin{proof}
The result is a consequence of Theorem 3.4 in \cite{GaLa12}. To apply this theorem,
we have to show that the discrete norms $\|\cdot\|_{1,\T_m}$
and $\|\cdot\|_{-1,\T_m}$ satisfy the assumptions of Lemma 3.1 in \cite{GaLa12}:
\begin{enumerate}
\item For any sequence $(v_m)\subset\mathcal{H}_{\T_m}$ such that there exists
$C>0$ with $\|v_m\|_{1,\T_m}\le C$ for all $m\in\N$, there exists 
$v\in L^2(\Omega)$ such that, up to a subsequence, $v_m\to v$ in $L^2(\Omega)$.
\item If $v_m\to v$ strongly in $L^2(\Omega)$ and 
$\|v_m\|_{-1,\T_m}\to 0$ as $m\to\infty$, then $v=0$.
\end{enumerate}
The first property is proved in, for instance, \cite[Lemma 5.6]{EGH08}.
Here, we need assumption \eqref{2.dd} on the mesh.
The second property can be replaced, according to \cite[Remark 6]{GaLa12},
by the condition that $\|\cdot\|_{1,\T_m}$ and $\|\cdot\|_{-1,\T_m}$ are dual norms
with respect to the $L^2(\Omega)$ norm, which is the case here. We infer that
there exists a subsequence of $(u_m)$, which is not relabeled, such that
$u_m\to u$ strongly in $L^2(\Omega_T)$. 
The weak convergence of the discrete gradients can be proved as in Lemma 4.4
in \cite{CLP03}. Indeed, the boundedness of $(\na_m u_m)$ in $L^2$ implies
the convergence to some function $\chi\in L^2(\Omega_T)$ (up to a subsequence). 
In order to show that $\chi=\na u$, it remains to verify that for all test functions
$\phi\in C_0^\infty(\Omega_T;\R^d)$,
$$
  \int_0^T\int_\Omega \na_m u_m\cdot\phi dxdt
	+ \int_0^T\int_\Omega u_m\diver\phi dxdt \to 0 \quad\mbox{as }m\to\infty.
$$
This limit follows from the definition of $\na_m u_m$ and the regularity of the
mesh. We refer to \cite[Lemma 4.4]{CLP03} for details.
\end{proof}

\begin{lemma}[Discrete Aubin-Lions of ``degenerate'' type]\label{lem.aubin2}
Let $(y_m)$ and $(z_m)$ be sequences in $\mathcal{H}_{\T_m,\dt_m}$ which are bounded in
$L^\infty(\Omega_T)$ and let $(y_m)$ be relatively compact in $L^2(\Omega_T)$, i.e.,
up to a subsequence, $y_m\to y$ strongly in $L^2(\Omega_T)$ and $z_m\rightharpoonup^* z$
weakly* in $L^\infty(\Omega_T)$. Furthermore, 
suppose that, for some constant $C>0$ independent of $m$,
$$
  \sum_{k=1}^{N_m}\dt_m\big(\|y_m^k\|_{1,\T_m}^2 + \|y_m^kz_m^k\|_{1,\T_m}^2
	+ \|\pa_t^{\dt_m} z_m^k\|_{-1,\T_m}^2\big)\le C.
$$
Then there exists a subsequence which is not relabeled such that
$y_mz_m\to yz$ strongly in $L^2(\Omega_T)$ as $m\to\infty$.

\end{lemma}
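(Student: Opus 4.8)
The plan is to deduce the strong convergence of the product $w_m := y_m z_m$ by combining a weak-limit identification with a convergence of $L^2$-norms. First I would note that, since $(w_m)$ is bounded in $L^\infty(\Omega_T)$, a subsequence satisfies $w_m\rightharpoonup\chi$ weakly in $L^2(\Omega_T)$; testing $\int_{\Omega_T}w_m\phi = \int_{\Omega_T}z_m(y_m\phi)$ against $\phi\in L^2(\Omega_T)$, using $y_m\phi\to y\phi$ strongly in $L^1(\Omega_T)$ (Cauchy--Schwarz with $y_m\to y$ in $L^2$) and the weak-$*$ convergence $z_m\rightharpoonup^* z$, identifies $\chi=yz$. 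Since a weakly convergent sequence with $\limsup_m\|w_m\|_{L^2(\Omega_T)}\le\|yz\|_{L^2(\Omega_T)}$ converges strongly, the whole problem reduces to proving $\|w_m\|_{L^2(\Omega_T)}^2\to\|yz\|_{L^2(\Omega_T)}^2$.

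The algebraic key is the identity $w_m^2 = y_m^2 z_m^2 = a_m z_m$, where $a_m := y_m w_m = y_m^2 z_m$. This auxiliary sequence is regular in space: writing $a_{m,K}^k-a_{m,L}^k = y_{m,K}^k(w_{m,K}^k-w_{m,L}^k) + w_{m,L}^k(y_{m,K}^k-y_{m,L}^k)$ and using the $L^\infty$ bounds on $y_m,w_m$, one gets $\|a_m\|_{1,\T_m,\dt_m}^2 \le C(\|w_m\|_{1,\T_m,\dt_m}^2 + \|y_m\|_{1,\T_m,\dt_m}^2)\le C$ from the hypotheses. Moreover $y_m^2\to y^2$ strongly in $L^2(\Omega_T)$ (as $y_m\to y$ in $L^2$ with $y_m$ bounded in $L^\infty$), so the same strong--weak$*$ argument yields $a_m = y_m^2 z_m\rightharpoonup y^2z$ weakly in $L^2(\Omega_T)$. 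Hence it remains to show the product limit $\int_{\Omega_T}a_m z_m\to\int_{\Omega_T}(y^2z)\,z = \|yz\|_{L^2(\Omega_T)}^2$.

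This product limit is the heart of the matter and is a discrete compensated-compactness statement: $a_m$ is regular in space (bounded in the discrete $L^2(0,T;H^1)$ norm, with $\na_m a_m\rightharpoonup\na a$ and $a:=y^2z\in L^2(0,T;H^1(\Omega))$ by the gradient-consistency argument in the proof of Lemma \ref{lem.aubin}), whereas $z_m$ is regular in time (the bound on $\|\pa_t^{\dt_m}z_m\|_{-1,\T_m}$). I would exploit this asymmetry by upgrading the weak convergence of $z_m$ to strong convergence in a negative-order norm: since $(z_m)$ is bounded in $L^2(\Omega_T)$ and its discrete time derivative is controlled, a discrete Aubin--Lions--Simon argument---relying on the compact embedding $L^2(\Omega)\hookrightarrow\hookrightarrow H^{-1}(\Omega)$ (dual to the discrete Rellich property used for Lemma \ref{lem.aubin}) and on the comparison of $\|\cdot\|_{-1,\T_m}$ with the $H^{-1}$ norm from \cite{GaLa12}---gives, along a subsequence, $z_m\to z$ strongly in $L^2(0,T;H^{-1}(\Omega))$. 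Pairing in the $H^{-1}$--$H^1$ duality then closes the estimate: splitting $\int_{\Omega_T}a_m z_m - \int_{\Omega_T}az = \int_0^T\langle z_m-z,a_m\rangle\,dt + \int_0^T\langle z,a_m-a\rangle\,dt$, the first term is bounded by $\|z_m-z\|_{L^2(0,T;H^{-1})}\|a_m\|_{L^2(0,T;H^1)}\to 0$ and the second vanishes by the weak $L^2(0,T;H^1)$ convergence of $a_m$.

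The main obstacle is precisely this compensated-compactness step. Because there is by assumption no control on the discrete gradient of $z_m$, one cannot apply the standard discrete Aubin--Lions lemma (Lemma \ref{lem.aubin}) to $z_m$, nor to $w_m$ or $a_m$: differentiating the products $y_m z_m$ in time produces terms such as $(\pa_t^{\dt_m}y_m)z_m$ that are not controlled in $\|\cdot\|_{-1,\T_m}$, so the discrete time derivative of the product is unavailable. The degeneracy is therefore circumvented by measuring $z_m$ in the weak (dual) topology, where the time-derivative bound alone yields compactness, while the spatial regularity is carried entirely by the companion factor $a_m$; making the discrete $H^{-1}$ compactness uniform across the varying meshes $(\T_m)$ is the genuinely technical point. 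Once $\|w_m\|_{L^2(\Omega_T)}^2\to\|yz\|_{L^2(\Omega_T)}^2$ is established, the weak convergence $w_m\rightharpoonup yz$ upgrades to the desired strong convergence $y_m z_m\to yz$ in $L^2(\Omega_T)$, and a standard subsequence argument removes the extraction.
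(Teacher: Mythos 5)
Your overall strategy is sound and genuinely different from the paper's. The paper proves the statement by verifying the Kolmogorov--Riesz criterion for $(y_mz_m)$ directly: space translates are controlled by the discrete $H^1$ bound on $y_mz_m$ via \cite[Lemma 4]{EGH99}, and the time translates are split so that the terms involving $y_m(\cdot,t+\tau)-y_m(\cdot,t)$ are absorbed by the relative compactness of $(y_m)$, while the dangerous term $\int y_m^2\,(z_m(t+\tau)-z_m(t))^2$ is expanded into three pieces and the first two are estimated by telescoping the time increments of $z_m$ and pairing them against $a_m=y_m^2z_m$ in the \emph{discrete} duality $\|\cdot\|_{1,\T_m}$--$\|\cdot\|_{-1,\T_m}$. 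Remarkably, you introduce exactly the same auxiliary quantity $a_m=y_m^2z_m$ and the same gradient estimate for it; the difference is that you then pass to the continuous $H^{-1}(\Omega)$ framework (weak convergence plus norm convergence, with the product limit $\int a_mz_m\to\int y^2z\cdot z$ obtained by compensated compactness), whereas the paper never leaves the discrete dual norms. Your route buys a cleaner conceptual picture (spatial regularity carried by $a_m$, temporal regularity by $z_m$, paired in dual topologies); the paper's buys the avoidance of any mesh-dependent norm comparison.

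Two technical points in your argument are asserted but would need real work, and you should be aware that they are not free. First, upgrading $\sum_k\dt_m\|\pa_t^{\dt_m}z_m^k\|_{-1,\T_m}^2\le C$ to strong compactness of $(z_m)$ in $L^2(0,T;H^{-1}(\Omega))$ requires comparing $\|\cdot\|_{-1,\T_m}$ with the continuous $H^{-1}$ norm uniformly in $m$; the natural way is to project a test function $\phi\in H^1(\Omega)$ onto cell averages, which is stable in $\|\cdot\|_{1,\T_m}$ only under the mesh regularity \eqref{2.dd} and produces an $O(h_m)\|\phi\|_{H^1}$ error in $L^2$. If you apply this to the time \emph{derivative} you pick up an uncontrolled factor $h_m/\dt_m$; you must telescope the time increments first and only then project, which yields $\|z_m(\cdot,t+\tau)-z_m(\cdot,t)\|_{H^{-1}}\le C(\sqrt{\tau}+h_m)$ and then a Simon-type argument (using $h_m\to0$ to restore uniformity). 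Second, the pairing $\langle z_m-z,a_m\rangle_{H^{-1},H^1}$ is not literally available because $a_m$ is piecewise constant and hence not in $H^1(\Omega)$; you need a conforming reconstruction $\tilde a_m$ with $\|\tilde a_m\|_{H^1}\le C\|a_m\|_{1,\T_m}$ and $\|a_m-\tilde a_m\|_{L^2}\le Ch_m\|a_m\|_{1,\T_m}$, and the residual term must be estimated separately. Both points are fillable, but they constitute the real content of the ``genuinely technical point'' you flag, and the paper's fully discrete Kolmogorov--Riesz argument is precisely designed to sidestep them.
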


\begin{proof}
The idea of the proof is to use the Kolmogorov-Riesz theorem
\cite[Theorem 4.26]{Bre11} as in the continuous case; see
\cite[Section 4.4]{BDPS10} or \cite[Lemma 13]{Jue15}. 
The discrete case, however, makes necessary some changes in the calculations.
We need to show that
\begin{equation}\label{a.aux}
  \lim_{(\xi,\tau)\to 0}\int_0^{T-\tau}\int_\omega\big((y_mz_m)(x+\xi,t+\tau)
	- (y_mz_m)(x,t)\big)^2 dxdt = 0
\end{equation}
uniformly in $m$, where $\omega\subset\Omega$ satisfies $x+\xi\in\Omega$ for
all $x\in\omega$. First, we separate the space and time translation:
\begin{align*}
	\int_{0}^{T-\tau}\int_{\omega}
	&\Big((y_mz_m)(x+\xi,t+\tau)-(y_mz_m)(x,t)\Big)^2dxdt \\ 
	&\le 2\int_{0}^{T-\tau}\int_{\omega}
	\Big((y_mz_m)(x+\xi,t+\tau)-(y_mz_m)(x,t+\tau)\Big)^2dxdt \\
	&\phantom{xx}{}
	+ 2\int_{0}^{T-\tau}\int_{\omega}\Big((y_mz_m)(x,t+\tau)-(y_mz_m)(x,t)\Big)^2dxdt
	=: I_1+I_2.
\end{align*}
For the estimate of $I_1$, we apply a result for space translations of piecewise
constant functions $v$ with uniform bounds in the discrete $H^1(\Omega)$ norm, namely
$$
  \|v(\cdot+\xi)-v\|_{L^2(\Omega)}^2 \le |\xi|\big(|\xi| + Ch(\T)\big)
	\|v\|_{1,\T}^2
$$
for appropriate $\xi$, where $C>0$ only depends on $\Omega$
\cite[Lemma 4]{EGH99}. This shows that
$$
  I_1 \le C_1|\xi|\big(|\xi| + Ch(\T_m)\big)
$$
converges to zero as $\xi\to 0$ uniformly in $m$. 

For the second integral $I_2$, we write
\begin{align*}
	I_2 &\le 4\int_{0}^{T-\tau}\int_{\omega}z_m(x,t+\tau)^2\big(y_m(x,t+\tau)-y_m(x,t)
	\big)^2dxdt \\
	&\phantom{xx}{} + 4\int_{0}^{T-\tau}\int_{\omega}y_m(x,t)^2
	\big(z_m(x,t+\tau)-z_m(x,t)\big)^2dxdt =: I_{21}+I_{22}.
\end{align*}
The $L^\infty$ bounds on $z_m$ give
$$
  I_{21} \le C\int_0^{T-\tau}\int_\omega\big(y_m(x,t+\tau)-y_m(x,t)\big)^2 dxdt.
$$
By assumption, the sequence $(y_m)$ is relatively compact in $L^2(\Omega_T)$.
Therefore, we can apply the inverse of the Kolmogorov-Riesz theorem
\cite[Exercise 4.34]{Bre11} to conclude that $I_{21}$ converges to zero as $\tau\to 0$
uniformly in $m$. 

The analysis of $I_{22}$ is more involved. We split the integral in several parts:
\begin{align*}
	I_{22} &= \int_{0}^{T-\tau}\int_{\omega}y_m(x,t)^2z_m(x,t)
	\big(z_m(x,t)-z_m(x,t+\tau)\big)dxdt \\
	&\phantom{xx}{}+ \int_{0}^{T-\tau}\int_{\omega}y_m(x,t+\tau)^2z_m(x,t+\tau)
	\big(z_m(x,t+\tau)-z_m(x,t)\big)dxdt \\
	&\phantom{xx}{}+ \int_{0}^{T-\tau}\int_{\omega}\big(y_m(x,t)^2-y_m(x,t+\tau)^2\big)
	z_m(x,t+\tau)\big(z_m(x,t+\tau)-z_m(x,t)\big)dxdt \\
	&=: J_1+J_2+J_3.
\end{align*}
The first two integrals $J_1$ and $J_2$ are treated similarly as in 
\cite[Lemma 3.11]{BrMa13}. Indeed, let $\lceil s\rceil$ denote the smallest integer 
larger or equal to $s$. Defining $n_{m}(t):=\lceil t/\dt_m \rceil$, 
we can formulate
$$
	z_m(x,t+\tau)-z_m(x,t) = \sum_{k=n_{m}(t)+1}^{n_{m}(t+\tau)} 
	\big(z^k_{m,K}-z^{k-1}_{m,K}\big)
$$
for $x\in K$, $0\le t\le T-\tau$. With this formulation, we can bound $J_1$,
using the duality of $\|\cdot\|_{1,\T_m}$ and $\|\cdot\|_{-1,\T_m}$:
\begin{align*}
	J_1 &\leq \int_{0}^{T-\tau}\bigg(\sum_{K\in\T_m}\m(K)
	\big(y_{m,K}^{n_{m}(t)}\big)^2 z_{m,K}^{n_{m}(t)}
	\sum_{k=n_{m}(t)+1}^{n_{m}(t+\tau)} \big(z^{k-1}_{m,K}-z^k_{m,K}\big)\bigg)dt \\
	&\le \int_{0}^{T-\tau}\bigg(\sum_{k=n_{m}(t)+1}^{n_{m}(t+\tau)} 
	\big\|(y_{m}^{n_{m}(t)})^2 z_{m}^{n_{m}(t)}\big\|_{1,\T_m} 
	\big\|z^k_{m}-z^{k-1}_{m}\big\|_{-1,\T_m} \bigg)dt \\
	&\le \frac{1}{2}\int_{0}^{T-\tau}\sum_{k=n_{m}(t)+1}^{n_{m}(t+\tau)} 
	\dt_m\|(y_{m}^{n_{m}(t)})^2 z_{m}^{n_{m}(t)}\|_{1,\T_m}^2 dt \\
	&\phantom{xx}{}+ \frac{1}{2}\int_{0}^{T-\tau}
	\sum_{k=n_{m}(t)+1}^{n_{m}(t+\tau)} \frac{1}{\dt_m}
	\big\|z^k_{m}-z^{k-1}_{m}\big\|_{-1,\T_m}^2 dt \\
	&\le \frac{\tau}{2}\sum_{k=1}^{N_m}\dt_m\|(y_{m}^k)^2 z_{m}^k\|_{1,\T_m}^2 
	+ \frac{\tau}{2}\sum_{k=1}^{N_m}\frac{1}{\dt_m}
	\big\|z^k_{m}-z^{k-1}_{m}\big\|_{-1,\T_m}^2,
\end{align*}
where the last inequality follows from \cite[Lemmas 4.1 and 4.2]{ABH13}. 
Let us remark that, for all $\sigma =K|L\in\E_{\rm int}$, we can rewrite 
$$
(y_K)^2z_K-(y_L)^2z_L=\frac{y_K+y_L}{2}(y_Kz_K-y_Lz_L)+\frac{y_Kz_K+y_Lz_L}{2}(y_K-y_L).
$$
Then,
$$
\|(y_{m}^k)^2 z_{m}^k\|_{1,\T_m}^2 \leq \m(\Omega)\|(y_{m}^k)^2 z_{m}^k\|_{L^\infty(\Omega)}^2
+\|y_{m}^k\|_{L^\infty(\Omega)}\|y_{m}^kz_{m}^k\|_{1,\T_m}+ \|y_{m}^kz_m^k\|_{L^\infty(\Omega)}\|y_{m}^k\|_{1,\T_m}.
$$
Hence, $J_1\le C\tau$ for some $C>0$. An analogous estimation leads to $J_2\le C\tau$.
It remains to estimate the integral $J_3$. For this, we use, similar to the treatment
of $I_{21}$, the $L^\infty$ bounds on $y_m$ and $z_m$:
$$
  J_3 \le C\int_0^{T-\tau }\int_\omega\big|y_m(x,t+\tau)-y_m(x,t)\big|dxdt.
$$
This expression converges to zero uniformly in $m$ because of the relative compactness
of $(y_m)$ in $L^2(\Omega_T)$. 

We deduce from the previous computations that \eqref{a.aux} holds true. 
Therefore, the product $(y_mz_m)$ converges strongly in $L^2(\Omega_T)$, up to
some subsequence, and in view of the convergences $y_m\to y$ strongly in 
$L^2(\Omega_T)$ and $z_m\rightharpoonup^* z$ weakly* in $L^\infty(0,T;L^\infty(\Omega))$,
the limit of $(y_mz_m)$ equals $yz$, which finishes the proof.
\end{proof}

\end{appendix}


\end{document}